\numberwithin{equation}{section} \overfullrule 5pt
\newtheorem{Theorem}{Theorem}[section]
\newtheorem{Corollary}[Theorem]{Corollary}
\newtheorem{Conjecture}[Theorem]{Conjecture}
\newtheorem{Proposition}[Theorem]{Proposition}
\newtheorem{Lemma}[Theorem]{Lemma}
\theoremstyle{Definition}
\DeclareMathOperator{\Stiel}{Stiel}
\DeclareMathOperator{\Jac}{Jac}
\title[Automaticity of the Hankel determinants]{%
On the automaticity of sequences defined by continued fractions}
\date{August 13, 2019}
\author{Guo-Niu Han}
\address{Universit\'e de Strasbourg, CNRS, IRMA UMR 7501, F-67000 Strasbourg, France}
\email{guoniu.han@unistra.fr}
\author{Yining Hu}
\address{School of Mathematics and Statistics, 
Huazhong University of Science and Technology, Wuhan, PR China}
\email{huyining@hust.edu.cn}
\subjclass[2010]{11B85, 11J70, 11B50, 11Y65, 05A15}
\keywords{automatic sequence, continued fraction, Hankel determinant, Thue-Morse sequence, period-doubling sequence}
\begin{document}
\begin{abstract} 
	Continued fraction expansions of automatic sequences have been extensively
	studied during the last decades. The research interests are, on one hand, 
	in the degree or automaticity of the partial quotients following the seminal
paper of Baum and Sweet in 1976, and on the other hand, in calculating the
	Hankel determinants and irrationality exponent, as one can find in the works of 
	Allouche-Peyri\`ere-Wen-Wen, Bugeaud, and the first author.
The present paper is motivated by the converse problem: to study continued fractions whose coefficients form an automatic sequence.  We consider two such continued fractions defined by the Thue-Morse and period-doubling sequences respectively, and 
prove that they are congruent to algebraic series in $\mathbb{Z}[[x]]$
	modulo $4$. Consequently, the sequences of the coefficients 
	of the power series expansions of the two continued fractions
	modulo $4$
	are $2$-automatic.
Our approach is to first guess the explicit formulas of certain subsequences
	of $(P_n(x))$ and $(Q_n(x))$, where $P_n(x)/Q_n(x)$ is the canonical  
	representation of the truncated continued fractions, then prove these formulas
	by an intricate induction involving eight subsequences while exploiting 
	the relations between these subsequences.
\end{abstract}

\maketitle

\section{Introduction}\label{sec:Intro} 
	Continued fraction expansions of automatic sequences have been extensively
	studied during the last decades. 
	By the well-known Theorem of Christol \cite{Christol1980KMFR}, an element
	in $\mathbb{F}_q((1/x))$ is algebraic over $\mathbb{F}_q(x)$ if and only if
	its coefficients form a $q$-automatic sequence.
	In 1976, Baum and Sweet \cite{Baum1976S} proved that the continued fraction
	expansion of
	the unique solution $\varphi(x)$ in $\mathbb{F}_2((1/x))$ of the equation
	$$
  xf^3+f+x=0,
$$
  has partial quotients
	of bounded degree. They also gave examples of algebraic elements of degree
	greater than $2$ in $\mathbb{F}_q((1/x))$ whose continued fraction expansion
	have partial quotients of unbounded degree. In contrast, we do not know
	any algebraic real number of degree greater than $2$ that has bounded or 
	unbounded partial quotients. In \cite{Baum1977S}, for every even degree
	$d$, Baum and Sweet gave examples of elements in $\mathbb{F}_2((1/x))$, 
	algebraic of degree $d$, with bounded partial quotients. In \cite{Mills1986R},
	Mills and Robbins gave explicitly the continued fraction expansion of 
	$\varphi(x)$. They also gave examples, for each odd prime $p$, of algebraic 
	elements of degree greater than $2$ in $\mathbb{F}_p((1/x))$ whose partial 
	quotients are linear; this was largely generalized  by Lasjaunias and Yao 
	in \cite{Lasjaunias2015Y}.
	Allouche proved in \cite{Allouche1988} 
	that the sequences of partial quotients for the examples in \cite{Mills1986R} are
	automatic.  In contrast, Mkaouar proved that the sequence parital quotients
	of $\varphi(x)$, while being morphic, is not automatic. 
	In \cite{Lasjaunias2016Y} and \cite{Lasjaunias2017Y}, Lasjaunias and Yao
	considered the sequence
	of leading coefficients of the partial quotients instead of the partial
	quotients themselves and described several families of hyperquadratic 
	series for which the leading coefficients of the partial quotients form
	automatic sequences.

On the other hand, continued fraction expansions of automatic sequences have been studied for calculating the Hankel determinants and irrationality exponents \cite{han2015hankel,Han2016Adv,Fokkink2017etal,Badziahin2019}.  In 1998, Allouche, Peyri\`ere, Wen and Wen proved that all the Hankel determinants of the Thue-Morse sequence are nonzero [1]. This property allowed Bugeaud to prove that the irrationality exponents of the Thue-Morse-Mahler numbers are exactly 2 \cite{Bugeaud2011}. Since then, the Hankel determinants for several other automatic sequences, in particular, the paperfolding sequence, the Stern sequence and the period-doubling sequence, are studied by Coons, Vrbik, Guo, Wu, Wen, Bugeaud and Fu \cite{Coons2012V, Guo2014GWW,Fu2016H,Bugeaud2014H}. Using Jacobi continued fractions, the first author found a simple proof of APWW's result \cite{han2015hankel}. Finally, The Euler-Lagrange theorem says that the continued fraction expansion of a quadratic irrational number is ultimately periodic. The first author obtained similar result for quadratic power series on finite fields \cite{Han2016Adv}.  

The present paper is motivated by the converse problem: to study continued fractions whose coefficients form an automatic sequence.

We now give a brief introduction to automatic sequences. We refer the readers
to \cite[p. 185]{Allouche2003Sh} for more details.
Automatic sequences appear naturally in the study of various domains of
mathematics and theoretical computer science.
One of the equivalent definitions of automatic sequences is the following:
for an integer $k\geq 2$, a sequence $(u_n)_{n\geq 0}$ is said to be 
{\it $k$-automatic} if its {\it $k$-kernel}, defined as 
 $$\{ (u(k^d n+j))_{n\geq 0} \mid d\in \mathbb{N},\, 0\leq j\leq k^d-1 \},$$
is finite. Thus, if we denote by $\Lambda_j$ the Cartier operators \cite[p. 376]{Allouche2003Sh} that
maps $\sum_{n=0}^{\infty} a_n x^n$ to $\sum_{n=0}^{\infty} a_{kn+j} x^n$, then
the $k$-kernel of $(u_n)_{n\geq 0}$ is in bijection with the smallest 
set containing the series $\sum_{n=0}^{\infty}  u_n x^n$
that is stable under the operations of $\Lambda_j$ ($j=0,1,\ldots,k-1$).
We use a double list $L$ to encode the structure of the kernel, by $L[i][j]=i'$
we mean that the $i$-th element of the kernel is mapped to the $i'$-th by 
$\Lambda_j$, with the sequence itself denoted by the $0$-th element.

In this article we will consider the {\it Thue-Morse sequence}  
$\mathbf{t}=(t_n)$
defined by the recurrence relations
(see \cite{Thue1912}, \cite{Allouche1998Sh}) 
\begin{align*}
	t_0&=1; \\
	t_{2n}&= t_n; &(n\geq 1)\\
	t_{2n+1}&=-t_n, &(n\geq 0)
\end{align*}
and the {\it period-doubling sequence}  
$\mathbf{s}=(s_n)$
defined by the recurrence relations
\cite{Schaeffer2016, Fokkink2017etal}
\begin{align*}
	s_{2n}&= 1; &(n\geq 0)\\
	s_{2n+1}&=-s_n. &(n\geq 0)
\end{align*}
We see from the definition that the $2$-kernel of the Thue-Morse sequence
is $$\{(t_{n})_n, (t_{2n+1})_n\},$$
and the $2$-kernel of the period-doubling sequence is
$$\{(s_n)_n, (s_{2n})_n, (s_{2n+1})_n,(s_{4n+1})_n\}.$$
Therefore they are both $2$-automatic.
The structures of the above two 2-kernels are 
represented by $[[0,1], [1,0]]$ and $[[1,2],[1,1],[3,0], [3,3]]$ respectively.

\medskip

Basic definition and properties of continued fractions will be recalled in 
Section~\ref{sec:frac}.  
We consider the continued fractions defined by the Thue-Morse and the
period-doubling sequence:
\begin{equation}\label{eq:defc}
C(x):=\sum_{n\geq 0} c_n x^n 
:=
\cfrac {t_0}{ 1+ \cfrac{t_1x}{ 1+ \cfrac{t_2x}{ 1+\cfrac {t_3x}{ 1+\cfrac{t_4x}{\ddots}}}} }
= 
\cfrac {1}{ 1- \cfrac{x}{ 1- \cfrac{x}{ 1+\cfrac {x}{ 1-\cfrac{x}{\ddots}}}} }
\end{equation}
and 
\begin{equation}\label{eq:defd}
D(x):=\sum_{n\geq 0} d_n x^n 
:=
\cfrac {s_0}{ 1+ \cfrac{s_1x}{ 1+ \cfrac{s_2x}{ 1+\cfrac {s_3x}{ 1+\cfrac{s_4x}{\ddots}}}} }
= 
\cfrac {1}{ 1- \cfrac{x}{ 1+ \cfrac{x}{ 1+\cfrac {x}{ 1+\cfrac{x}{\ddots}}}} }
\end{equation}
The above two continued fractions will be called {\it Thue-Morse} continued fraction and {\it Period-doubling} continued fraction respectively.
Write $\bar{c}_n=\pi(c_n)$, $\bar{C}(x)=\sum_{n\geq 0} \bar{c}_n
x^n$, and $\bar{d}_n=\pi(d_n)$, $\bar{D}(x)=\sum_{n\geq 0} \bar{d}_n
x^n$, where $\pi$ is the canonical surjection of $\mathbb{Z}$ onto $\mathbb{Z}/
4\mathbb{Z}$. Note that we could also have defined $\bar{C}(x)$ and $\bar{D}(x)$
by \eqref{eq:defc} and \eqref{eq:defd} while viewing $\pm 1$ as elements
in $\mathbb{Z}/4\mathbb{Z}$.
The first terms of these sequences are listed below.
\begin{align*}
	(t_n)&=(1,-1,-1,1,-1,1,1,-1,-1,1,1,-1,1,-1,-1,1,\ldots),\\
	(c_n)&=(1, 1, 2, 3, 4, 6, 8, 11, 14, 18, 20, 22, 16, 4, -32, -93, -220, \ldots),\\
	(\bar{c}_n)&=( 1, 1, 2, 3, 0, 2, 0, 3, 2, 2, 0, 2, 0, 0, 0, 3, 0, 2, 0, 2, 0, 0, 0,  \ldots), \\
	(s_n)&=(1,-1,1,1,1,-1,1,-1,1,-1,1,1,1,-1,1,1,\ldots),\\
	(d_n)&=
(1, 1, 0, 1, -2, 4, -8, 17, -36, 74, -152, 316, -656, 1352,  \ldots),
\\
	(\bar{d}_n)&=
(1, 1, 0, 1, 2, 0, 0, 1, 0, 2, 0, 0, 0, 0, 0, 1, 2, 0, 0, 2, 0, 0, 0, 0,\ldots ).
\end{align*}
Notice that the sequences $(c_n), (\bar{c}_n), (d_n), (\bar{d}_n)$ are not in the OEIS.

\medskip

In the present paper we study the above two continued fractions and obtain 
the following properties of the sequences $(\bar{c}_n)$ and $(\bar{d}_n)$.

\begin{Theorem}\label{th:main} 
  We have the following congruence:
\begin{equation}\label{eq:mainf}
C(x)
  \equiv
  \frac{ \sqrt{1-4x}-1}{2x} + 1+ \sqrt{2\sqrt{1-4x}-1} \pmod 4.
\end{equation}
\end{Theorem}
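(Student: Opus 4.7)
The natural approach is to study $C(x)$ through the convergents $P_n(x)/Q_n(x)$ of its continued fraction \eqref{eq:defc}. They satisfy the standard three-term recurrences
\[
P_n=P_{n-1}+t_n\,x\,P_{n-2},\qquad Q_n=Q_{n-1}+t_n\,x\,Q_{n-2},
\]
so that $Q_n(0)=1$ and $P_n Q_{n-1}-P_{n-1}Q_n$ is a pure monomial in $x$; in particular $P_n/Q_n\to C(x)$ in the $x$-adic topology on $\mathbb{Z}[[x]]$. It therefore suffices to understand $P_n$ and $Q_n$ modulo $4$ sharply enough to pass to the limit.

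Following the abstract's roadmap, I would partition the indices $n$ into residue classes modulo a suitable power of $2$ and analyze the resulting eight interlocking subsequences of $(P_n,Q_n)_n$. The key structural observation is that $t_{4n+r}\in\{\pm t_n\}$ for $0\le r\le 3$, so the recurrences restricted to each residue class close up into a polynomial system involving only other subsequences in the family. The first concrete step is to compute the initial terms modulo $4$ and guess closed-form generating functions for each of the eight subsequences. Given the shape of the right-hand side of \eqref{eq:mainf}, one should expect these formulas to be assembled from the two radicals $\sqrt{1-4x}$ and $\sqrt{2\sqrt{1-4x}-1}$ together with their powers and products; the identity $(1-2x)^{2}\equiv 1-4x\pmod 4$ makes such nested radicals tractable modulo $4$.

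The principal obstacle is then a simultaneous induction on $n$ that verifies the eight guessed identities. Each step applies the basic recurrence, uses the Thue-Morse sign rules to split off a sign $t_n$, and rewrites the result using the guesses for different residue classes. Because the subsequences are strongly coupled, the induction must advance all eight in lock-step, and a large number of apparently distinct algebraic expressions must be shown to coincide modulo $4$. This bookkeeping, while essentially mechanical, is delicate and is where the bulk of the work lies.

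Finally, once the closed forms for the eight subsequences are established, the theorem follows by taking the $x$-adic limit of the quotient $P_n/Q_n$ along a suitable residue class. The guesses are arranged so that the dominant terms of $P_n$ and $Q_n$ assemble into exactly the right combination of radicals, yielding $(\sqrt{1-4x}-1)/(2x)+1+\sqrt{2\sqrt{1-4x}-1}$ modulo $4$, which is \eqref{eq:mainf}.
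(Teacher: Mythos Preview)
Your overall roadmap (study convergents $P_n/Q_n$, find closed forms for eight interlocking subsequences by induction, then take the $x$-adic limit) matches the paper's strategy. However, the specific decomposition you propose is not the one that works, and this is a genuine gap rather than a cosmetic difference.

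You suggest slicing $(P_n,Q_n)$ along residue classes $n\bmod 2^k$, using the arithmetic-progression relations $t_{4n+r}=\pm t_n$. But the three-term recurrence does not decouple usefully along arithmetic progressions: $P_{4n}$ depends on $P_{4n-1},P_{4n-2}$, which live in other classes, and iterating four steps produces a matrix recurrence whose entries are degree-$4$ polynomials in $x$ with signs depending on the \emph{value} of $t_n$, not merely on the residue class. There is no reason to expect closed generating-function formulas for $\sum_n P_{4n+r}(x)z^n$ modulo~$4$ built from the two radicals.

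The paper instead exploits the \emph{block substitution} structure $t_{2^m+j}=-t_j$ for $0\le j<2^m$. This yields the key recurrence (Lemma~\ref{th:pq})
\[
U_{2^{m+1}-\epsilon}(x)=Q_{2^m-\epsilon}(-x)\,U_{2^m-1}(x)-x\,P_{2^m-\epsilon}(-x)\,U_{2^m-2}(x)
\qquad(U\in\{P,Q\},\ \epsilon\in\{1,2\}),
\]
so the eight subsequences are $P_{2^k-1},P_{2^k-2},Q_{2^k-1},Q_{2^k-2}$ with $k$ even or odd --- a sparse dyadic set of indices, not residue classes. Their closed forms modulo $4$ (Proposition~\ref{th:explicit}) are not directly in terms of $\sqrt{1-4x}$ and $\sqrt{2\sqrt{1-4x}-1}$; they are explicit polynomials in $S_m(x)=\sum_{j<m}x^{2^j}$, $S^e_m$, $S^o_m$. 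Only \emph{after} taking the limit $m\to\infty$ does one use $\sqrt{1-4x}\equiv 1+2\sum_{k\ge1}x^{2^k}\pmod 4$ and the analogous identity for $S_\infty^e$ to convert $S_\infty$, $S_\infty^e$ into the radicals of \eqref{eq:mainf}. Your expectation that the inductive formulas themselves be written with radicals would make the induction untenable, since at finite $m$ one genuinely needs the truncated sums $S_m,S_m^e,S_m^o$.
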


\begin{Theorem}\label{th:mainpd}
  We have the following congruence:
\begin{equation}\label{eq:mainf-pd}
D(x)  
  \equiv
\frac{(1+\sqrt{1+4x})\sqrt{2\sqrt{1-4x^2}-1} -2}{2x}
  \pmod 4.
\end{equation}
\end{Theorem}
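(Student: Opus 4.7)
The plan is to work with the convergents $P_n(x)/Q_n(x)$ of the period-doubling continued fraction $D(x)$. These satisfy the standard three-term recurrence induced by the continued fraction structure, namely $P_n = P_{n-1} + s_n x P_{n-2}$ and $Q_n = Q_{n-1} + s_n x Q_{n-2}$ (with initial conditions read off from \eqref{eq:defd}), and the desired congruence will follow by letting $n \to \infty$ and matching the resulting formal power series against the explicit algebraic expression on the right-hand side of \eqref{eq:mainf-pd}. The basic reason a limit exists mod $4$ is that $D(x) - P_n(x)/Q_n(x) = O(x^{n+1})$, so the classes of $P_n/Q_n$ modulo $4$ stabilize coefficient by coefficient.

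Because the $2$-kernel of $(s_n)$ has four elements with structure $[[1,2],[1,1],[3,0],[3,3]]$, one expects that modulo $4$ the sequences $(P_n)$ and $(Q_n)$ decompose naturally into four subsequences each, giving the eight subsequences announced in the abstract. Step~1 is therefore to compute a few hundred convergents modulo $4$ and guess, for suitable residues $a$ modulo a power of $2$, explicit identities of the form $P_{2^k n+a}(x) \equiv F_a(x)\,P_n(x) + G_a(x)\,Q_n(x) \pmod 4$, and analogously for $Q$, where $F_a, G_a \in \mathbb{Z}[x]$ are low-degree polynomials. The role model here is the analogue for the Thue-Morse case (Theorem~\ref{th:main}), where the $2$-kernel has only two elements.

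Step~2 is to verify the eight guessed identities \emph{simultaneously} by induction on $n$, using the CF recurrences and the defining relations $s_{2n}=1$, $s_{2n+1}=-s_n$. This is the main obstacle: the identities are mutually coupled — each inductive step for one identity calls on several of the others — which is precisely the ``intricate induction involving eight subsequences while exploiting the relations between these subsequences'' referred to in the abstract. A wrong or incomplete guess at Step~1 will cause the coupling to fail to close, so substantial computer experimentation and refinement should be expected before a consistent set of identities is found.

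Step~3 is to push $n \to \infty$ in the identities to obtain a closed form for $D(x) \bmod 4$ in terms of generating series arising from the eight subsequences. Step~4 identifies these generating series with the ingredients appearing in \eqref{eq:mainf-pd}: the factor $\sqrt{2\sqrt{1-4x^2}-1}$ should emerge in direct analogy with the factor $\sqrt{2\sqrt{1-4x}-1}$ in Theorem~\ref{th:main}, reflecting that the period-doubling recurrence replaces $x$ by $x^2$ at even indices; the new factor $(1+\sqrt{1+4x})$ should arise from the asymmetry between even- and odd-indexed values of $s_n$, which has no counterpart in the Thue-Morse case. Verifying these algebraic identifications reduces to checking that each side satisfies the same quadratic-over-quadratic polynomial equation modulo $4$, which can be done directly from the closed forms produced in Step~2.
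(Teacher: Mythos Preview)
Your plan takes a genuinely different route from the paper's proof, and as written it is only an outline, not a proof.

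The paper does \emph{not} redo the convergent analysis for $D(x)$. Instead it observes, via the Contraction Theorem (Theorem~\ref{th:Contraction}) and the identity $t_{2n-1}t_{2n}=s_{n-1}$, that
\[
C(x)=\frac{1}{1-x-x^2 D(-x^2)}.
\]
Having already proved $C(x)\equiv H_1(x)\pmod 4$ in Theorem~\ref{th:main}, one solves this relation for $D(-x^2)$ and then checks, by a direct algebraic computation modulo $4$ using \eqref{eq:ss}, \eqref{eq:S2}, \eqref{eq:Se}, that the result agrees with the right-hand side of \eqref{eq:mainf-pd}. No new induction on convergents is needed. The ``eight subsequences'' in the abstract refer to the proof of Theorem~\ref{th:main}, not of Theorem~\ref{th:mainpd}.

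Your proposed direct approach is plausible in spirit, but two points deserve caution. First, you have not actually supplied the guessed identities, so there is no proof to verify; you yourself note that ``substantial computer experimentation and refinement should be expected.'' Second, the template you propose, $P_{2^k n+a}\equiv F_a P_n + G_a Q_n$, does not match the structure that worked for Thue--Morse: there the key was Lemma~\ref{th:pq}, which exploits the block relation $t_{2^m+j}=-t_j$ and involves a change of variable $x\mapsto -x$, followed by explicit closed forms at the special indices $2^k-1$ and $2^k-2$ (Proposition~\ref{th:explicit}). The period-doubling sequence has no analogue of $t_{2^m+j}=-t_j$; its self-similarity is governed by the substitution $1\mapsto 1,-1$, $-1\mapsto 1,1$, so whatever replaces Lemma~\ref{th:pq} would have a different shape and would need to be discovered from scratch.

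If completed, your route would yield an independent proof not relying on Theorem~\ref{th:main}; the paper's route is much shorter precisely because it reuses that theorem through the contraction identity.
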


	The following Theorem from \cite{Denef1987} then allows us to conclude
	that $(\bar{c}_n)_n$ and $(\bar{d}_n)_n$ are $2$-automatic.
\begin{Theorem}[Denef-Lipschitz]\label{th:dl}
	Suppose that the power series $f(x_1,\ldots,x_k)\in\mathbb{Z}_p[[x_1,\ldots,x_k]]$
	is algebraic over $\mathbb{Z}_p[x_1,\ldots,x_k]$. Then for each $\alpha$, the 
	coefficient sequence of $f \pmod{p^\alpha}$ is $p$-automatic.
\end{Theorem}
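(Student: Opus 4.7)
The plan is to reduce the statement to the field case $\alpha=1$, which is Christol's theorem in $k$ variables, and to bridge the two cases by realising $f$ as the diagonal of a $p$-adic rational function. In a first step I would establish a $p$-adic analogue of Furstenberg's diagonal representation: every algebraic $f\in\mathbb{Z}_p[[x_1,\ldots,x_k]]$ can be written as a diagonal $f=\Delta(P/Q)$ of some rational function $P/Q$ in enlarged variables $y_1,\ldots,y_N$ over $\mathbb{Z}_p$, with $Q(0)$ a unit. The classical construction over a field starts from the annihilating polynomial $F(\mathbf{x},Y)$ of $f$, performs a substitution that separates $f$ from the other roots of $F$, and applies Hensel's lemma in the new variable to extract $f$ as a rational expression whose diagonal recovers $f$. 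The same construction runs verbatim over $\mathbb{Z}_p$ because Hensel's lemma is available and separability of $f$ over the fraction field can be arranged in characteristic zero.

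Reducing modulo $p^\alpha$ in the second step is immediate: since $Q(0)$ is a unit in $\mathbb{Z}_p$, its image is a unit in $R_\alpha:=\mathbb{Z}/p^\alpha\mathbb{Z}$, so $\bar P/\bar Q$ still expands as a power series in $R_\alpha[[\mathbf{y}]]$ and $\bar f=\Delta(\bar P/\bar Q)$. The multivariate Cartier operators $\tilde\Lambda_{\mathbf{j}}$ with $\mathbf{j}\in\{0,\ldots,p-1\}^N$ commute with $\Delta$ in the sense that the one-variable $p$-kernel of $\bar f$ lies inside the image under $\Delta$ of the $\tilde\Lambda$-orbit of $\bar P/\bar Q$. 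If the $R_\alpha$-submodule $M$ of $R_\alpha[[\mathbf{y}]]$ generated by that orbit is finitely generated, then the $p$-kernel of $\bar f$ is finite and $(\bar f_n)$ is $p$-automatic. Candidate generators for $M$ are the finite collection of rational expressions $\mathbf{y}^{\mathbf{i}}\bar P^a/\bar Q^b$ with $\mathbf{i}$, $a$, $b$ bounded by explicit functions of $\deg P$, $\deg Q$, and $\alpha$.

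The main obstacle is verifying closure of $M$ under each $\tilde\Lambda_{\mathbf{j}}$. Over $\mathbb{F}_p$ this is almost automatic thanks to the Frobenius identity $\bar Q^p=\bar Q(\mathbf{y}^p)$, which lets $\tilde\Lambda_{\mathbf{j}}$ pull $\bar Q(\mathbf{y}^p)$ outside itself and divide the exponent of $\bar Q$ by $p$. Over $R_\alpha$ one only has $Q^p=Q(\mathbf{y}^p)+pE(\mathbf{y})$ for some polynomial $E$, and iterating the Frobenius trick introduces a cascade of defect terms of increasing complexity. The way around this is to raise $Q$ to the power $p^{\alpha-1}$ before applying Cartier operators so that the residual defect is a multiple of $p^\alpha$ and vanishes modulo $p^\alpha$, and to enlarge the generating set of $M$ to absorb the intermediate errors. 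Verifying that the enlarged generating set is still finite over $R_\alpha$ is the technical heart of the Denef--Lipschitz argument; once this finiteness is secured, automaticity of $(\bar f_n)$ follows formally from the fact that the $p$-kernel is a subset of a finite set.
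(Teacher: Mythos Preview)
The paper does not prove Theorem~\ref{th:dl}; it is quoted from \cite{Denef1987} and used as a black box to deduce automaticity of $(\bar c_n)$ and $(\bar d_n)$ from the algebraicity statements in Theorems~\ref{th:main} and~\ref{th:mainpd}. So there is no ``paper's own proof'' to compare against.

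That said, your sketch is broadly faithful to the actual Denef--Lipschitz argument: represent the algebraic series as the diagonal of a $p$-adic rational function, reduce modulo $p^\alpha$, and show that the orbit of the reduced rational function under multivariate Cartier operators spans a finitely generated $R_\alpha$-module. You also correctly locate the real work, namely that over $R_\alpha$ the Frobenius identity only holds up to a multiple of $p$, and one must raise $Q$ to a suitable $p$-power and enlarge the generating set to kill the defect terms. Two points deserve tightening. First, the Furstenberg-type representation step (``runs verbatim over $\mathbb{Z}_p$'') is not entirely free: one must ensure that the annihilating polynomial can be chosen with coefficients in $\mathbb{Z}_p[\mathbf{x}]$ and that the construction produces a rational function whose denominator has unit constant term in $\mathbb{Z}_p$, not merely in $\mathbb{Q}_p$; Denef and Lipschitz handle this carefully. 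Second, your opening sentence says you will ``reduce to the field case $\alpha=1$'', but the rest of the sketch does not do that---it runs a parallel argument directly over $R_\alpha$, which is the right thing to do. The reduction-to-Christol phrasing is misleading and should be dropped.
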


The automaticity of  $(\bar{c}_n)_n$ and $(\bar{d}_n)_n$ can also be proved by
a direct calculation of their $2$-kernels.

\begin{Theorem}\label{th:mainmod4}
	The sequence $(\bar{c}_n)$ is $2$-automatic; the structure of its $2$-kernel
is represented by
$[[1, 2], [3, 4], [5, 6], [1, 7], [4, 7], [5, 4], [8, 6], [7, 7], [8, 4]]$.
\end{Theorem}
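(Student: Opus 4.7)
The plan is to identify the nine kernel elements as explicit algebraic series modulo $4$ and to verify that the Cartier operators $\Lambda_0,\Lambda_1$ act on them as described by $L$. The starting point is the formula from Theorem~\ref{th:main}, which exhibits $\bar{C}(x)$ modulo $4$ as an algebraic function built from $\alpha=\sqrt{1-4x}$ and $\beta=\sqrt{2\alpha-1}$.

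The first step is to convert the right-hand side of \eqref{eq:mainf} into a form suitable for recursive computation modulo $4$. Substituting $\alpha=1+2A$ and $\beta=1+2B$, the identities $\alpha^2=1-4x$ and $\beta^2=2\alpha-1$ degenerate modulo $2$ to the quadratic equations $A^2+A+x\equiv 0 \pmod{2}$ and $B^2+B+g\equiv 0 \pmod{2}$, where $g=(1-\alpha)/2$ is (essentially) the Catalan series. Under this change of variables, $\bar{C}(x)$ modulo $4$ becomes a polynomial expression in $A$ and $B$ with coefficients in $\mathbb{Z}/4\mathbb{Z}[x]$, providing the algebraic datum attached to element~$0$ of the $2$-kernel.

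The second step is to iterate $\Lambda_0$ and $\Lambda_1$. Using $f(x)=(\Lambda_0 f)(x^2)+x(\Lambda_1 f)(x^2)$, the operators satisfy $2(\Lambda_0 f)(x^2)=f(x)+f(-x)$ and $2x(\Lambda_1 f)(x^2)=f(x)-f(-x)$. Since division by $2$ is forbidden modulo $4$, I would lift the computation to $\mathbb{Z}/8\mathbb{Z}[[x]]$ (or to $\mathbb{Z}[[x]]$), perform the extractions there, and reduce modulo~$4$ only at the end; after each step the result is then pushed back into canonical polynomial form in $A,B$ using their defining equations. Along the way several states turn out to be extremely simple---the numerical data strongly suggest that element~$7$ is the zero series (reflecting $\bar{c}_{8n+4}\equiv 0$) and that element~$4$ is the constant series~$2$ (reflecting $\bar{c}_{4n+2}=0$ for $n\ge 1$ with $\bar{c}_2=2$), which is consistent with the self-loops $L[4]=[4,7]$ and $L[7]=[7,7]$ present in the claimed structure.

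Having produced explicit algebraic representatives for all nine candidate kernel elements, the verification reduces to a finite check: that the nine series are pairwise distinct (either by comparing sufficiently many initial coefficients or by comparing their algebraic equations), and that each application of $\Lambda_0$ or $\Lambda_1$ matches the transitions prescribed by $L$. The main obstacle is controlling the Cartier operators on the nested square roots of Theorem~\ref{th:main} modulo $4$ without computational blow-up; the substitutions $\alpha=1+2A$, $\beta=1+2B$ are precisely what make this tractable, since they reduce most of the algebra to manipulations in $\mathbb{F}_2[[x]]$, with only a few correction terms modulo~$4$ to track explicitly. Once these are in place, the closure of the $2$-kernel at exactly nine elements and the correctness of $L$ follow from a finite sequence of polynomial identities.
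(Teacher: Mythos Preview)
Your plan is sound in principle and closely related to the paper's argument, but the paper takes a shorter path. Rather than working algebraically with $\alpha,\beta$ (or your $A,B$), the paper first establishes an explicit \emph{lacunary} series expression (Proposition~\ref{prop:c}):
\[
\bar C(x)=1-\sum_{i,j\ge 0}x^{2^i+2^j-1}+2\sum_{k\ge 0}x^{2^{2k}}.
\]
On sums such as $S_\infty=\sum_j x^{2^j}$ and $S_\infty^e=\sum_k x^{2^{2k}}$ the Cartier operators act transparently (e.g.\ $\Lambda_0 S_\infty=S_\infty$, $\Lambda_1 S_\infty=1$, $\Lambda_0 S_\infty^e=S_\infty^o$), so the entire $2$-kernel is obtained by eighteen one-line computations, producing the nine series $f_0,\dots,f_8$ and the transitions in $L$ directly. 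Your substitutions $A,B$ in fact satisfy $A\equiv S_\infty$ and $B\equiv S_\infty^e\pmod 2$, so once unwound your approach would converge to exactly this picture; the paper simply starts there.

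The one point where your outline is genuinely thinner than the paper is the mechanism for applying $\Lambda_0,\Lambda_1$ to an expression in $A,B$. Your proposed route via $f(x)\pm f(-x)$ requires relating $A(-x),B(-x)$ back to $A(x),B(x)$, which you do not address; this is not automatic, and lifting to $\mathbb{Z}/8\mathbb{Z}$ does not by itself resolve it. The paper sidesteps this entirely because the lacunary sums are their own Cartier images. So what your approach buys is conceptual uniformity with Theorem~\ref{th:main}; what the paper's buys is that the kernel computation becomes a short, mechanical check with no lifting and no sign-change gymnastics.
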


\begin{Theorem}\label{th:mainmod4pd}
	The sequence $(\bar{d}_n)$ is $2$-automatic; the structure of its $2$-kernel
	is represented by
	$[[1,0],[2,3],[1,4],[3,3],[4,3]]$
\end{Theorem}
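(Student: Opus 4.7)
The plan is to derive Theorem~\ref{th:mainmod4pd} from Theorem~\ref{th:mainpd}. Since the right-hand side of \eqref{eq:mainf-pd} is an element of $\mathbb{Z}_2[[x]]$ that is algebraic over $\mathbb{Z}_2[x]$ (being built from polynomials by arithmetic operations and square roots of unit radicands), the Denef--Lipschitz theorem (Theorem~\ref{th:dl}) applied with $p=2$ and $\alpha=2$ already gives that $(\bar d_n)$ is $2$-automatic. What remains is to recover the explicit five-state kernel structure, for which I would perform a direct calculation.

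The tool is the parity decomposition $f(x)=(\Lambda_0 f)(x^2)+x(\Lambda_1 f)(x^2)$, applied iteratively. Two building blocks make the computation manageable. First, $V(x):=\sqrt{2\sqrt{1-4x^2}-1}$ is visibly a series in $x^2$, so $\Lambda_1 V=0$ and $\Lambda_0 V(x)=\sqrt{2\sqrt{1-4x}-1}$. Second, $U(x):=\sqrt{1+4x}$ has mixed parity, and writing $U(x)=U_0(x^2)+xU_1(x^2)$ and squaring yields $U_0^2+x^2U_1^2+2xU_0U_1\equiv 1+4x\pmod 4$, which determines $U_0$ and $U_1$ as algebraic series in terms of further square roots. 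Substituting these decompositions into the formula for $\bar D(x)$ and simplifying produces closed-form expressions for $\Lambda_0\bar D$ and $\Lambda_1\bar D$ as elements of $\mathbb{Z}/4\mathbb{Z}[[x]]$.

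Continuing to apply the Cartier operators, the orbit of $\bar D$ stabilizes after a few iterations to five series. The last two entries of the list are degenerate states of the automaton: from the transitions $[3,3]$ one deduces that state $3$ must be a constant sequence, and by direct inspection of the data it is in fact the zero series; and from $[4,3]$ together with state $3$ being zero, state $4$ is a sequence supported only at index $0$ (numerically, $(2,0,0,\ldots)$). With these two trivial states identified, verifying the remaining transitions reduces to an algebraic manipulation on the explicit square-root expressions for states $1$ and $2$.

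The main obstacle is the disciplined handling of nested square roots in $\mathbb{Z}/4\mathbb{Z}[[x]]$: at each intermediate step one must check that the radicand is a unit congruent to $1$ modulo $2$ so that the square root is a well-defined element, and then carry out the even/odd decomposition. A practical safeguard is to compute the first several dozen coefficients of each candidate kernel element from its algebraic expression and compare them against the tabulated values of $(\bar d_n)$; agreement together with the uniqueness of the power-series solution of each defining algebraic equation then confirms the identity.
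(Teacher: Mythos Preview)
Your invocation of Denef--Lipschitz for the bare automaticity statement is exactly what the paper does. The difference lies entirely in how the five-state kernel is extracted.

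The paper does not apply the Cartier operators to the nested square-root expression \eqref{eq:mainf-pd}. Instead it first passes through Corollary~\ref{th:dcoro}, which rewrites $\bar D(x)$ as an explicit series whose exponents lie in elementary sets of the form $\{2^i+2^j-1\}$, $\{2^{2k+1}-1\}$, $\{2^j\}$. The operators $\Lambda_0,\Lambda_1$ act transparently on such sums (one merely shifts the exponents of $2$ appearing in the index description), so each kernel element falls out from a short combinatorial calculation. In particular the non-obvious fixed-point relation $\Lambda_1\bar D=\bar D$ is obtained by an explicit cancellation between these sums, not by manipulating radicals.

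Your plan to stay with the algebraic expressions is viable in principle but has two soft spots as written. First, the relation $U_0^2+x^2U_1^2+2xU_0U_1\equiv 1+4x\pmod 4$ does \emph{not} determine $U_0,U_1$ modulo $4$: splitting parities only gives $U_0^2+xU_1^2\equiv 1\pmod 4$ and $U_0U_1\equiv 0\pmod 2$, a system with many solutions (e.g.\ $U_0=1$, $U_1=0$). To pin them down you must import Lemma~\ref{th:lem} or an equivalent, at which point you are essentially reconstructing the explicit series of Corollary~\ref{th:dcoro} anyway. Second, the ``practical safeguard'' of matching finitely many coefficients is only a proof if it comes with an effective coincidence bound; you do not supply one, so that step remains heuristic. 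The paper's route avoids both issues by doing the radical-to-series translation once, up front, and then computing purely combinatorially.
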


The right hand side of congruence $\eqref{eq:mainf}$ and $\eqref{eq:mainf-pd}$
are respectively of degree $4$ and $8$ over $\mathbb{Z}(x)$. This raises the
question of what the minimal degree of polynomial equations 
that $\bar{C}$ and $\bar{D}$
satisfy is. Concerning this, we have the following result.

\begin{Theorem}\label{th:degree4}
	Let $S(x,y)=(xy^2+y+1)^2\in \mathbb{Z}/4\mathbb{Z}[x,y]$, then for both 
	series $\bar{C}(x)$ and $\bar{D}(x)$ in $\mathbb{Z}/4\mathbb{Z}$,
	we have $S(x,\bar{C}(x))=S(x,\bar{D}(x))=0$. Furthermore, there is no 
	polynomial in $\mathbb{Z}/4\mathbb{Z}[x,y]$ that, seen as a polynomial
	in $y$, has degree less than $4$, and, 
	 whose leading coefficient is invertible in the ring of Laurent series
	$\mathbb{Z}/4\mathbb{Z}((x))$, that annihilates either $\bar{C}(x)$ 
	or $\bar{D}(x)$.
\end{Theorem}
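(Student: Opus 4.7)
The plan splits naturally into the two parts of the theorem.

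For the first part, I would show the stronger congruence $xy^2 + y + 1 \equiv 0 \pmod 2$ at $y = \bar{C}$ and $y = \bar{D}$; squaring this produces a multiple of $4$. Set $\alpha = (1-\sqrt{1-4x})/(2x)$ (the Catalan generating function, satisfying $x\alpha^2 = \alpha - 1$) and $\beta = \sqrt{1-4x\alpha}$ (so $\beta^2 \equiv 1 \pmod 4$); Theorem~\ref{th:main} then reads $\bar{C} \equiv 1 - \alpha + \beta \pmod 4$. Expanding $x\bar{C}^2 + \bar{C} + 1$ and reducing using these two relations yields the factorization
\[
x\bar{C}^2 + \bar{C} + 1 \equiv (1 + \beta)\bigl(1 + 2x(1-\alpha)\bigr) \pmod 4,
\]
and since the binomial expansion of $(1-4x\alpha)^{1/2}$ has all nonconstant coefficients even, $\beta \equiv 1 \pmod 2$ and $1+\beta$ is divisible by $2$. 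The argument for $\bar{D}$ is parallel, starting from Theorem~\ref{th:mainpd}.

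For the minimality part, the crucial object is $W := (x\bar{C}^2 + \bar{C} + 1)/2 \in \mathbb{F}_2[[x]]$, well defined by the first part via the identification $2\mathbb{Z}/4\mathbb{Z}((x)) \cong \mathbb{F}_2((x))$. From the factorization above, $W \equiv 1 + \mu \pmod 2$ with $\mu := (\beta-1)/2 \pmod 2$. Using the Catalan expansion $\beta - 1 = -2\sum_{k \geq 1} \mathrm{Cat}_{k-1} (x\alpha)^k$ (where $\mathrm{Cat}_n = \binom{2n}{n}/(n+1)$), the classical fact that $\mathrm{Cat}_n$ is odd iff $n+1$ is a power of $2$, the explicit form $\alpha \equiv \sum_{n\geq 0} x^{2^n - 1} \pmod 2$, and the Frobenius computation $(x\alpha)^{2^j} = \sum_{\ell \geq j} x^{2^\ell}$ in $\mathbb{F}_2[[x]]$, a direct summation gives
\[
W \equiv 1 + \sum_{\ell \geq 0}(\ell + 1)x^{2^\ell} \equiv 1 + \sum_{k \geq 0} x^{4^k} = 1 + x + x^4 + x^{16} + x^{64} + \cdots \pmod 2.
\]
The support $\{0\} \cup \{4^k : k\geq 0\}$ is not ultimately periodic, so $W \notin \mathbb{F}_2(x)$.

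To finish, I would show that any annihilating polynomial $P \in \mathbb{Z}/4\mathbb{Z}[x, y]$ of $y$-degree $d \leq 3$ with invertible leading coefficient would force $W \in \mathbb{F}_2(x)$, the desired contradiction. After dividing by the leading coefficient, one obtains a monic polynomial in $\mathbb{Z}/4\mathbb{Z}(x)[y]$ (where $\mathbb{Z}/4\mathbb{Z}(x)$ is the localization of $\mathbb{Z}/4\mathbb{Z}[x]$ at non-zero-divisors). For $d=1$ this places $\bar{C} \pmod 2$ in $\mathbb{F}_2(x)$, contradicting irreducibility of $xy^2 + y + 1$ over $\mathbb{F}_2(x)$. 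For $d \in \{2, 3\}$, mod-$2$ reduction forces the polynomial to be a multiple of the minimal polynomial $y^2 + y/x + 1/x$ of $C' := \bar{C} \pmod 2$; writing each coefficient as $b_i = b_i^{(0)} + 2 c_i$ with $b_i^{(0)}, c_i \in \mathbb{F}_2((x))$, substituting the identity $\bar{C}^2 = (2W - \bar{C} - 1)/x$, and dividing the resulting mod-$4$ equation by $2$ produces a mod-$2$ equation in $\mathbb{F}_2((x))[C']$ that, decomposed in the basis $\{1, C'\}$, expresses $W$ as an $\mathbb{F}_2(x)$-linear combination of the $c_i$. The polynomial-coefficient hypothesis forces each $c_i \in \mathbb{F}_2(x)$, so $W \in \mathbb{F}_2(x)$, contradiction. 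The parallel argument for $\bar{D}$ rests on the analogous non-rationality of $W_D := (x\bar{D}^2 + \bar{D} + 1)/2 \pmod 2$. The main technical obstacle is the explicit Catalan/Frobenius evaluation of $W$; given that, the rest reduces to direct linear algebra over the quadratic extension $\mathbb{F}_2((x))[C']$.
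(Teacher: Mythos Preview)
Your first part and your explicit computation of $W = 1 + \sum_{k\ge 0} x^{4^k}$ are correct and in fact cleaner than the paper's route to the same series. The gap is in the minimality argument. Take the $d=2$ case: after making the annihilator monic, multiplying by $x$, and using $x\bar C^2 + \bar C + 1 = 2W$, you get
\[
2W + (xb_1 - 1)\bar C + (xb_0 - 1) = 0 \quad\text{in }\ \mathbb{Z}/4\mathbb{Z}((x)),
\]
and since $\pi_2(xb_i - 1)=0$ this becomes, after dividing by $2$,
\[
W + \gamma_1\,C' + \gamma_0 = 0 \quad\text{in }\ \mathbb{F}_2((x)),\qquad \gamma_i\in\mathbb{F}_2(x).
\]
This only places $W$ in the \emph{quadratic} extension $\mathbb{F}_2(x)[C']$, not in $\mathbb{F}_2(x)$. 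Your ``decompose in the basis $\{1,C'\}$'' step presupposes that the $C'$-component of $W$ is zero, which is exactly what is at stake. Thus the non-periodicity of the support of $W$ (giving only $W\notin\mathbb{F}_2(x)$) is too weak: you need $W\notin\mathbb{F}_2(x)[C']$, i.e.\ $[\mathbb{F}_2(x)(W):\mathbb{F}_2(x)]>2$. The $d=3$ case has the same issue: after reducing powers of $\bar C$ you obtain a single $\mathbb{F}_2((x))$-relation $A\,C'+B=0$ where $A,B$ are $\mathbb{F}_2(x)$-linear in $W$, and again the conclusion is only $W\in\mathbb{F}_2(x)[C']$.

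The fix is immediate from your own formula: $W^4 = 1 + \sum_{k\ge 1} x^{4^k}$, so $W^4+W=x$, and one checks directly that $y^4+y+x$ is irreducible over $\mathbb{F}_2(x)$ (it has no root in $\mathbb{F}_2[x]$, and a factorization $(y^2+\xi y+1)(y^2+\eta y+x)$ forces $\xi+\eta=0$ and $\xi\eta=x+1$, impossible). Hence $W$ has degree $4$ over $\mathbb{F}_2(x)$ and in particular $W\notin\mathbb{F}_2(x)[C']$; this is precisely the paper's argument. For $\bar D$ the paper likewise shows that the analogous $W_D$ satisfies the irreducible quartic $xy^4+y^3+1$, so your ``parallel argument'' needs the same upgrade from $W_D\notin\mathbb{F}_2(x)$ to $W_D\notin\mathbb{F}_2(x)[D']$.
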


Informally put, Theorem \ref{th:degree4} says that $\bar{C}(x)$ and 
$\bar{D}(x)$ are of degree $4$, while their continued fraction expansion
are $2$-automatic. It may be interesting to compare Theorem \ref{th:degree4} to
the following result concerning automatic sequences and real continued 
fractions \cite{Bugeaud2013}.
\begin{Theorem}[Bugeaud 2013]
	The continued fraction expansion of an algebraic number of degree at least
	three cannot be generated by a finite automaton.
\end{Theorem}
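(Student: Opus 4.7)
The plan is to argue by contradiction through a quantitative transcendence criterion for ``stammering'' continued fractions, whose engine is the $p$-adic Schmidt Subspace Theorem. Suppose, for contradiction, that $\alpha$ is algebraic over $\mathbb{Q}$ of degree $d\ge 3$ and that the sequence of partial quotients $(a_n)_{n\ge 1}$ in $\alpha=[a_0;a_1,a_2,\ldots]$ is generated by a finite automaton, hence $k$-automatic for some $k\ge 2$. Since any automatic sequence takes only finitely many values, there is an absolute bound $a_n\le M$; in particular $\alpha$ is badly approximable, and its convergents $p_n/q_n$ satisfy $|\alpha-p_n/q_n|\asymp 1/q_n^2$ uniformly in $n$.

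The heart of the proof is to extract from the automaticity of $(a_n)$ a quantitative repetition pattern. Using the morphic/substitutive description of $k$-automatic sequences together with a pigeonhole argument on the finite state set of the generating automaton, one exhibits a real number $w>1$, a constant $C\ge 1$, and infinitely many factorisations of prefixes of the word $a_1 a_2 a_3\cdots$ of the form $U_m V_m^{w}$ with $|V_m|\to\infty$ and $|U_m V_m|\le C\,|V_m|$. In the terminology of Adamczewski--Bugeaud, the word $(a_n)$ is then \emph{stammering}. Each such repetition $U_m V_m^{w}$ produces, via the standard continued-fraction algebra attached to the convergents whose indices fall at the ends of the blocks $U_m V_m$ and $U_m V_m^2$, a pair of very good rational approximations to $\alpha$ together with a rational approximation to the quadratic irrational $[0;\overline{V_m}]$, all built from the same denominators.

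One then applies the Schmidt Subspace Theorem to the triple of linear forms in three variables $(1,\alpha,\alpha^2)$ suggested by this construction, exactly as in the Adamczewski--Bugeaud framework. The output is that, unless $\alpha$ itself is quadratic, infinitely many of these approximation vectors lie on a fixed proper rational subspace; one then shows, by a short argument on the denominators of the convergents, that this forces the blocks $V_m$ to be eventually periodic, contradicting the construction. Hence $\alpha$ must be quadratic or transcendental, contradicting $d\ge 3$.

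The principal obstacle is the stammering step: the exponent $w$ must \emph{strictly} exceed $1$ and the proportion $C$ must be uniform in $m$, so one cannot merely invoke abstract factor-complexity bounds ($O(n)$ complexity alone is not enough). One must exploit the precise morphic structure of $k$-automatic sequences---return words and the shape of the $k$-kernel---to locate fractional powers $V^{w}$ with $w>1$ along prefixes of $(a_n)$. Once this combinatorial input is secured, the rest of the proof is a direct invocation of the Subspace Theorem machinery already developed for stammering continued fractions.
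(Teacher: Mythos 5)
First, note that the paper does not actually prove this statement: it is quoted from Bugeaud's 2013 paper (the citation \cite{Bugeaud2013}) purely as a point of comparison with Theorem~\ref{th:degree4}, so there is no in-paper argument to measure your proposal against. Judged on its own, your sketch correctly identifies the strategy of the published proof --- combinatorial repetitions in the word of partial quotients feeding a Subspace-Theorem transcendence criterion --- but it is an outline rather than a proof, and the step you flag as the ``principal obstacle'' is not where the difficulty actually lies. Extracting the stammering condition ($U_mV_m^{w}$ a prefix with $w>1$, $|U_m|\le c|V_m|$, $|V_m|\to\infty$) from automaticity is the easy part: by Cobham's theorem a $k$-automatic sequence has factor complexity $O(n)$, and the pigeonhole lemma of Adamczewski--Bugeaud converts linear complexity directly into this condition for any non-eventually-periodic word; no finer use of return words or of the $k$-kernel is needed.

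The genuine gap is in the Diophantine step that you dispatch as ``a direct invocation of the Subspace Theorem machinery already developed for stammering continued fractions.'' That machinery, as developed by Adamczewski and Bugeaud for continued fractions, yields transcendence only when the repetitions are purely stammering ($U_m$ empty) or under an additional hypothesis on the growth of the continuants $q_{|U_m|}$ and $q_{|U_mV_m|}$ (roughly, that $q_{|U_mV_m|}^{1/|U_mV_m|}$ and $q_{|U_m|}^{1/|U_m|}$ remain comparable along the subsequence), a hypothesis one cannot verify for an arbitrary automatic sequence. Removing that hypothesis is precisely the content of Bugeaud's 2013 paper, and it requires the \emph{quantitative} Subspace Theorem of Evertse and Schlickewei --- an explicit bound on the number of exceptional subspaces, combined with a counting argument over a parametrized family of quadruples $(q_{r_m},q_{s_m},p_{r_m},p_{s_m})$ --- not the qualitative statement you invoke, and not a system of forms in the three variables $(1,\alpha,\alpha^2)$ alone, which only suffices in the purely stammering case. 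Your closing assertion that ``the rest of the proof is a direct invocation of the machinery'' therefore conceals the actual theorem: as written, the proposal reduces the statement to a criterion that is not strong enough to cover the automatic case.
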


The {\it Hankel determinant} of order $n$
of the formal power series $f(x)=a_0+a_1x+a_2x^2+\cdots$ 
(or of the sequence $(a_0,a_1,a_2, \cdots$)  is defined by 
\begin{equation*}
	H_n(f(x))=	H_n(a_0, a_1, a_2, \ldots):=\det (a_{i+j})_{0\leq i,j\leq n-1}
\end{equation*}
for $n\geq 1$, and  $H_0(f(x))=H_0(a_0, a_1, a_2, \ldots)=1$ if $n=0$.

Concerning the Hankel determinants of $C(x)$ and $D(x)$, we have the following
result.
\begin{Theorem}
	The sequences of Hankel determinants $(H_n({C}(x) ))$ and
	$(H_n({D}(x) ))$ are $2$-automatic.
\end{Theorem}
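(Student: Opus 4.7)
The strategy is to use the classical product formula expressing the Hankel determinants of a power series in terms of the coefficients of its Stieltjes continued fraction, exploiting the fact that both $C(x)$ and $D(x)$ are presented in this form with partial numerators in $\{-1,+1\}$. Rewriting \eqref{eq:defc} and \eqref{eq:defd} in the standard $-$-sign convention (each $+t_k x$ becoming $-(-t_k)x$, and similarly for $s_k$) and contracting the resulting S-fraction to a J-fraction, the Hankel-Jacobi identity yields
$$H_n(C)=\prod_{k=1}^{n-1}(t_{2k-1}t_{2k})^{n-k},\qquad H_n(D)=\prod_{k=1}^{n-1}(s_{2k-1}s_{2k})^{n-k},$$
since the sign flips cancel in pairs. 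In particular both sequences take values only in $\{-1,+1\}$, so $2$-automaticity reduces to that of their $\mathbb{F}_2$-valued exponent sequences.

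Using the recurrences $t_{2k-1}=-t_{k-1}$, $t_{2k}=t_k$ and $s_{2k-1}=-s_{k-1}$, $s_{2k}=1$, I write $t_{2k-1}t_{2k}=(-1)^{e_k}$ and $s_{2k-1}s_{2k}=(-1)^{e_k'}$ for $\{0,1\}$-valued $2$-automatic sequences $(e_k)$, $(e_k')$. Setting $A_n:=\sum_{k=1}^{n-1} e_k \bmod 2$ and $B_n:=\sum_{k=1}^{n-1} k\,e_k \bmod 2$, an expansion modulo $2$ gives
$$H_n(C)=(-1)^{E_n},\qquad E_n\;\equiv\;\sum_{k=1}^{n-1}(n-k)\,e_k\;\equiv\;nA_n+B_n\pmod 2,$$
and analogously $H_n(D)=(-1)^{E_n'}$ for a sequence $E_n'$ of the same shape.

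The remaining step is to invoke closure properties of $2$-automatic sequences over $\mathbb{F}_2$, all of which follow from Christol's theorem: pointwise sums and products preserve $2$-automaticity, and so does partial summation modulo $2$, because if $\sum_k e_k x^k$ is algebraic over $\mathbb{F}_2(x)$ then so is its product by the unit $1/(1-x)$, which equals $\sum_n A_n x^n$. Since $(e_k)$ and $(k\,e_k \bmod 2)$ are $2$-automatic, so are $(A_n)$ and $(B_n)$; combined with the $2$-automaticity of $(n\bmod 2)$ this yields the $2$-automaticity of $nA_n+B_n\bmod 2$, and therefore of $(H_n(C))$ and $(H_n(D))$ as sequences on the alphabet $\{-1,+1\}$.

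I expect no deep obstacle; the main care lies in the sign bookkeeping when passing to the $-$-sign convention (verifying that the flips genuinely cancel in pairs in the Hankel formula), and in checking that the Hankel-S-fraction formula applies without degeneracy — which it does because every partial numerator is a unit in $\mathbb{Z}$, so no vanishing-denominator issue arises.
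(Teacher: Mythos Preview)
Your argument is correct. Both you and the paper start from Heilermann's formula (Theorem~\ref{th:Hankel}) for the Hankel determinants of an S-fraction, obtaining $H_n(C)=\prod_{k=1}^{n-1}(t_{2k-1}t_{2k})^{n-k}$ and similarly for $D$; since $t_{2k-1}t_{2k}=s_{k-1}$ and $s_{2k-1}s_{2k}=-s_{k-1}$, these are products of $\pm 1$'s built from the $2$-automatic period-doubling sequence. The finishing step differs: the paper invokes a theorem of Allouche (stated as Theorem~\ref{th:aq}) asserting that running products of an automatic sequence under an associative operation remain automatic, applied twice --- once to get $u_n=s_0\cdots s_{n-1}$ automatic, and again to get $H_n(C)=u_0\cdots u_{n-1}$ automatic. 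You instead pass to exponents in $\mathbb{F}_2$ and expand $\sum_{k}(n-k)e_k\equiv nA_n+B_n$, reducing to two \emph{single} partial sums handled via Christol (multiplication by $1/(1-x)$ preserves algebraicity). This is a self-contained alternative that avoids citing Allouche's theorem, at the cost of a small extra manipulation; the paper's version makes the two-layer product structure more visible. The sign bookkeeping you worry about is a non-issue in the paper's conventions: Theorem~\ref{th:Hankel} is stated directly for the $+$-sign S-fraction, so no conversion is needed.
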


\medskip

Based on our results, we put forward the following conjecture.
\begin{Conjecture}\label{conj}
The sequences $c_n \pmod {2^m}$  and $d_n \pmod {2^m}$ are $2$-automatic
for all $m\geq 1$.
\end{Conjecture}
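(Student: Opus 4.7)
The plan is to extend the approach that yields Theorems~\ref{th:main} and~\ref{th:mainpd} from the modulus $4$ to every modulus $2^m$ by induction on $m$, and then invoke the Denef-Lipschitz theorem (Theorem~\ref{th:dl}) at each level. Concretely, for each $m$ I would aim to produce series $F_m(x), G_m(x)\in\mathbb{Z}_2[[x]]$, both algebraic over $\mathbb{Z}_2[x]$, with
\begin{equation*}
C(x)\equiv F_m(x)\pmod{2^m},\qquad D(x)\equiv G_m(x)\pmod{2^m}.
\end{equation*}
Theorem~\ref{th:dl} then immediately gives that $(c_n\bmod 2^m)$ and $(d_n\bmod 2^m)$ are $2$-automatic; the cases $m=1,2$ are already covered by Theorems~\ref{th:main} and~\ref{th:mainpd}.

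For the inductive step, given $F_m$, write $C(x)-F_m(x)=2^m H_m(x)$ with $H_m\in\mathbb{Z}_2[[x]]$. Constructing $F_{m+1}$ reduces to finding an algebraic lift of $H_m\bmod 2 \in \mathbb{F}_2[[x]]$: by Christol's theorem this is equivalent to proving that the coefficient sequence of $H_m\bmod 2$ is $2$-automatic, after which a Hensel-type lift of a minimal polynomial over $\mathbb{F}_2[x]$ supplies an element of $\mathbb{Z}_2[[x]]$ algebraic over $\mathbb{Z}_2[x]$ that reduces to $H_m\bmod 2$. To produce the algebraic representative I would mimic the strategy sketched in the abstract: guess closed-form expressions for $P_{kn+j}(x)$ and $Q_{kn+j}(x)$ modulo $2^{m+1}$ for a suitable period $k=k(m)$ and every residue $0\le j<k$, and verify them by simultaneous induction on $n$ via the convergent recurrences $P_{n+1}=P_n+\varepsilon_{n+1}x\,P_{n-1}$ and $Q_{n+1}=Q_n+\varepsilon_{n+1}x\,Q_{n-1}$, with $\varepsilon_n\in\{t_n,s_n\}$. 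The same scheme runs in parallel for $D(x)$.

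The principal obstacle is the combinatorial explosion inherent in this bookkeeping: the mod~$4$ argument already tracks eight interlocking subsequences, and both $k(m)$ and the number of distinct closed forms one must identify appear to grow rapidly with $m$, so guessing the correct ansatz at each level is likely to require substantial computer-algebra experimentation and some uniform structural organization to keep the case analysis tractable. A much cleaner alternative, if achievable, would be to exhibit a single polynomial $\Phi(x,y)\in\mathbb{Z}_2[x,y]$ with $\Phi(x,C(x))=0$ in $\mathbb{Z}_2[[x]]$, and similarly for $D$ — Theorem~\ref{th:degree4} hints that the $y$-degree might stay as low as four over $\mathbb{Z}_2$ — in which case a single application of Theorem~\ref{th:dl} would settle the conjecture uniformly in $m$.
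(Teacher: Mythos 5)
The statement you are addressing is labelled a \emph{Conjecture} in the paper, and the paper does not prove it: it only establishes the cases $m=1$ (via the Catalan generating function mod $2$) and $m=2$ (Theorems~\ref{th:main}, \ref{th:mainpd}, \ref{th:mainmod4}, \ref{th:mainmod4pd}), and reports experimental kernel structures for $m=3$. Your proposal likewise does not prove it. The outer frame is sound --- if for every $m$ one exhibits $F_m,G_m\in\mathbb{Z}_2[[x]]$ algebraic over $\mathbb{Z}_2[x]$ with $C\equiv F_m$ and $D\equiv G_m \pmod{2^m}$, then Theorem~\ref{th:dl} gives automaticity, and writing $C-F_m=2^mH_m$ correctly reduces the step $m\to m+1$ to showing that $H_m\bmod 2$ is algebraic over $\mathbb{F}_2(x)$. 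But that reduction is exactly where the entire difficulty sits, and you supply no argument for it: ``guess closed forms for $P_{kn+j},Q_{kn+j}$ modulo $2^{m+1}$ and verify by induction'' is a description of a computation one hopes succeeds at each level, not a proof that it does. There is no a priori guarantee that such closed forms exist, that the number of interlocking subsequences one must track remains finite, or that the guessing terminates --- indeed the paper's own mod-$4$ argument hinges on the specific doubling identities of Lemma~\ref{th:pq} (indexed by $2^k-1$ and $2^k-2$, not by arithmetic progressions $kn+j$ as you suggest), and nothing is offered to propagate that structure to higher powers of $2$.

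Your proposed shortcut --- a single $\Phi(x,y)\in\mathbb{Z}_2[x,y]$ with $\Phi(x,C(x))=0$ in $\mathbb{Z}_2[[x]]$ --- would indeed settle the conjecture in one stroke, but it asserts that $C(x)$ itself is algebraic over $\mathbb{Z}_2(x)$, a claim far stronger than the conjecture and supported by nothing in the paper. Theorem~\ref{th:degree4} only concerns the reduction modulo $4$; the fact that each reduction $C\bmod 2^m$ might be congruent to some algebraic series does not bound the degrees uniformly in $m$, which is what a single $\Phi$ would require. In short: the conjecture remains open, and your proposal, while a reasonable research program consistent with the paper's methods, contains an unproven inductive step and an unsupported alternative hypothesis rather than a proof.
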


Theorem \ref{th:mainmod4} and \ref{th:mainmod4pd} says that Conjecture 
\ref{conj} is true for $m=2$.  Note that if the conjecture is true for
$m=k$, then it is also true for all positive integers $m<k$.  
For $m=1$, we can also see directly that 
$$
C(x)\equiv D(x)\equiv \cfrac{1}{1-\cfrac{x}{1-\cfrac{x}{1-\cfrac{x}{\ddots}}}}\pmod 2.$$
The right hand side of the congruence is the generating function for the
Catalan numbers \cite{Branden2002CS}. Being quadratic, it is $2$-automatic modulo $2$.

When $m=3$, experiments suggest that $c_n \pmod{2^3}$ and $d_n \pmod{2^3}$ are $2$-automatic 
with the following kernel structure for $c_n \pmod{2^3}$
\begin{gather*}
	[[1, 2], [3, 4], [5, 6], [7, 8], [9, 10], [11, 12], [13, 6], [3, 14], [8, 10], 
[4, 8],\\
	[10, 10], [11, 15], [12, 8], [16, 12], [17, 10], [15, 8], [16, 15], [14, 8]];
\end{gather*}
and for $d_n \pmod{2^3}$
\begin{gather*}
	[[1, 2], [3, 4], [5, 2], [6, 7], [4, 4], [8, 9], [3, 9], [10, 4], [11, 12], \\
	[9, 4], [7, 9], [8, 4], [13, 4], [12, 9]].
\end{gather*}

This article is structured as follows: in Section \ref{sec:frac}, we give the 
definitions and properties of Stieltjes and Jacobi continued fractions. In 
Section \ref{sec:tm}, we exploit the structure of the Thue-Morse sequence
and obtain the relations between certain subsequences of $P_n(x)$ and $Q_n(x)$, with $P_n(x)/Q_n(x)$ being the canonical representation of the $n$-th convergent
of the continued fraction $C(x)$.
Then we prove by induction the explicit expression of eight subsequences. 
We only use two of them but we need all eight for the induction hypotheses.
Taking the limit, we obtain the explicit expression of the Thue-Morse
continued fraction $\bar{C}(x)$ as a power series and prove that it is 
equal to an algebraic series with integer coefficients modulo $4$. In
consequence, its coefficients form a $2$-automatic 
sequence. In Section \ref{sec:pd} we obtain similar results for the period-doubling continued fraction $\bar{D}(x)$ using what we have proved for $\bar{C}(x)$
and the relation between the Thue-Morse and the period-doubling sequences. 
In Section \ref{sec:degree4} we prove Theorem \ref{th:degree4}.
Finally in Section \ref{sec:hankel} we prove that the sequences of Hankel
determinants $H_n(C(x))$ and $H_n(D(x))$ are $2$-automatic.


\section{Stieltjes and Jacobi continued fractions}\label{sec:frac}  
Stieltjes and Jacobi continued fraction
are studied in enumerative combinatorics for their link with 
the orthogonal polynomials and the
weighted
Motzkin paths (see \cite[p.386, p.389]{Jones1980T}, \cite{Viennot1983Ort}, \cite{Flajolet1980}).  
For a sequence $\mathbf{a}=(a_n)_n$ taking values in a field $\mathbb{K}$, and for each positive
integer $n$, we define the rational fraction:
\begin{equation}\label{eq:fr:Stieltjes}
	\Stiel_n(\mathbf{a}):=\cfrac {a_0}{ 1+ \cfrac{a_1x}{ 1
  +\cfrac{a_2x}{\quad \cfrac{\ddots}{1+  \cfrac{a_{n-1}x}{1+a_nx}}}}},
\end{equation}
which we also denote by $[\![a_0,a_1,\ldots,a_n]\!]$ for short.

We define two sequence of polynomials $P_n(x)$ and $Q_n(x)$ by the initial 
conditions $P_0(x)=a_0$, $Q_0(x)=1$, $P_1(x)=a_0$ and $Q_1(x)=1+a_1x$, 
and for $n\geq 2$
\begin{equation}\label{pqdef}
\begin{pmatrix}1& a_n x\\ 1 & 0\end{pmatrix}\cdots\begin{pmatrix}1&a_2 x \\ 1&0\
	\end{pmatrix}\begin{pmatrix}P_1(x)&Q_1(x)\\P_0(x)& Q_0(x) \end{pmatrix}=\begin{pmatrix}
		P_n(x) & Q_n(x) \\ P_{n-1}(x) & Q_{n-1}(x)\end{pmatrix}.
		\end{equation}
We have $\Stiel_n(\mathbf{a})=P_n(x)/Q_n(x)$ for all $n$. 
A proof of the following theorem can be found in \cite[p. 257]{foata2000principes}.
\begin{Theorem}\label{th:fr:convergence}
  The sequence of formal power series $P_n(x)/Q_n(x)$ is convergent.
\end{Theorem}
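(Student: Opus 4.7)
The plan is to prove convergence in the $x$-adic topology on $\mathbb{K}[[x]]$ by showing that consecutive convergents differ by a formal power series of arbitrarily large valuation. First I would unfold the matrix identity \eqref{pqdef} into the scalar recurrences
\[
P_n = P_{n-1} + a_n x P_{n-2}, \qquad Q_n = Q_{n-1} + a_n x Q_{n-2} \qquad (n \geq 2).
\]
A short induction, starting from $Q_0 = 1$ and $Q_1(0) = 1$, shows that $Q_n(0) = 1$ for every $n$, since the correction term $a_n x Q_{n-2}$ vanishes at $x=0$. In particular each $Q_n$ is a unit in $\mathbb{K}[[x]]$, so that the rational function $P_n/Q_n$ is a well-defined element of $\mathbb{K}[[x]]$.

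Next I would analyze the cross-term $\Delta_n := P_n Q_{n-1} - Q_n P_{n-1}$ by taking determinants in \eqref{pqdef}. Each transfer matrix appearing on the left-hand side has determinant $-a_k x$, which yields
\[
\Delta_n = (-x)^{n-1} a_2 a_3 \cdots a_n \cdot \Delta_1.
\]
A direct computation gives $\Delta_1 = P_1 Q_0 - Q_1 P_0 = a_0 - (1+a_1 x)a_0 = -a_0 a_1 x$, and consequently
\[
\frac{P_n}{Q_n} - \frac{P_{n-1}}{Q_{n-1}} = \frac{\Delta_n}{Q_{n-1} Q_n} = \frac{(-1)^n a_0 a_1 \cdots a_n \, x^n}{Q_{n-1} Q_n}.
\]
Since $Q_{n-1} Q_n$ has constant term $1$ and hence is invertible in $\mathbb{K}[[x]]$, the right-hand side has $x$-adic valuation at least $n$.

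It follows that $(P_n/Q_n)_n$ is a Cauchy sequence for the $x$-adic metric, and completeness of $\mathbb{K}[[x]]$ delivers the limit. The only subtle point is the invertibility of $Q_n$ as a formal power series, settled by the constant-term induction; the rest is the standard determinant identity for linear continued fractions, and I anticipate no real obstacle. If one wants a cleaner formulation, this argument also shows that the limit agrees with $P_n/Q_n$ modulo $x^n$, which makes the convergence quantitative.
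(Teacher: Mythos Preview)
Your argument is correct. The recurrences $P_n = P_{n-1} + a_n x P_{n-2}$ and $Q_n = Q_{n-1} + a_n x Q_{n-2}$ are exactly what \eqref{pqdef} unfolds to, the induction $Q_n(0)=1$ is valid, and the determinant computation giving $P_nQ_{n-1}-Q_nP_{n-1}=(-1)^n a_0a_1\cdots a_n\,x^n$ is accurate. The Cauchy conclusion in the complete ring $\mathbb{K}[[x]]$ then follows immediately. One small sharpening: your final remark can be upgraded to agreement modulo $x^{n+1}$, since the tail $\sum_{m>n}\Delta_m/(Q_{m-1}Q_m)$ has valuation at least $n+1$.

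As for comparison with the paper: the paper does not give its own proof of this theorem but simply refers the reader to \cite[p.~257]{foata2000principes}. Your argument is the standard one for convergence of Stieltjes-type continued fractions and is almost certainly what that reference contains, so there is no meaningful methodological difference to discuss.
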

The infinite Stieltjes continued fraction $\Stiel(\mathbf{a})$ is defined to be
$$\lim\limits _{n\rightarrow \infty} P_n(x)/Q_n(x),$$ the rational fraction
$P_n(x)/Q_n(x)$
is called the {\it $n$-th convergent} of $\Stiel(\mathbf{a})$ and  the 
unsimplified fraction $P_n(x)/Q_n(x)$ the {\it canonical representation} of $\Stiel_n(\mathbf{a})$.

For $0\leq k < n$, if $P(x)/Q(x)$ is the canonical representation of the 
Stieltjes continued fraction $[\![a_k,\ldots,a_n]\!]$, then it can be easily shown from
\eqref{pqdef} that
\begin{align}\label{test}
	P_n(x)&=Q(x)P_{k-1}(x)+xP(x)P_{k-2}(x),\\
	Q_n(x)&=Q(x)Q_{k-1}(x)+xP(x)Q_{k-2}(x).\label{test2}
\end{align}

We define the Jacobi continued fractions in a similar way.
 For two sequences $\mathbf{u}=(u_n)_n$ and $\mathbf{v}=(v_n)_n$ with $v_i\neq 0$ for all 
 $i\in\mathbb{N}$, $\Jac(\mathbf{u},\mathbf{v})$  is defined to be
 the infinite continued fraction
\begin{equation}\label{eq:fr:Jacobi}
\Jac(\mathbf{u},\mathbf{v})
=
\cfrac {v_0 }{1 + u_1 x - 
  \cfrac{v_1 x^2 }{ 1+u_2x - 
    \cfrac{v_2 x^2 }{ {1 + u_3x - 
\cfrac{v_3 x^2}{ \ddots}}} }}. 
\end{equation}

The basic properties on Stieltjes and Jacobi continued fractions can be found in
\cite{Flajolet1980, Wall1948, Stieltjes1894Re, Heilermann1846}. We emphasize the fact that
the Hankel determinants can be calculated
from the Stieltjes and Jacobi continued fractions by means of the following  fundamental relation, first
stated by Heilermann in 1846 \cite{Heilermann1846}:
\begin{Theorem}\label{th:Hankel}
	The $n$th-order Hankel determinants of the Stieltjes \eqref{eq:fr:Stieltjes} and Jacobi \eqref{eq:fr:Jacobi} continued fractions are given by
	\begin{align*}
		H_n(\Stiel(\mathbf{a}))&=a_0^n(a_1a_2)^{n-1} (a_3a_4)^{n-2} \cdots (a_{2n-3} a_{2n-2}),\\
		H_n(\Jac(\mathbf{u},\mathbf{v})) 
		&= v_0^n v_1^{n-1} v_2^{n-2} \cdots v_{n-2}^2 v_{n-1}. 
\end{align*}
\end{Theorem}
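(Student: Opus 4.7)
The plan is to prove the Jacobi formula first, then deduce the Stieltjes case via the classical even-part contraction that rewrites any Stieltjes continued fraction as a Jacobi continued fraction with the same formal value.

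For the Jacobi formula, let $\mu_k := [x^k]\Jac(\mathbf{u},\mathbf{v})$ and define the linear functional $L$ on $\mathbb{K}[t]$ by $L(t^k) = \mu_k$. The denominator polynomials $Q_n^{(J)}$ of the Jacobi convergents satisfy the three-term recurrence $Q_n^{(J)} = (1+u_n x)Q_{n-1}^{(J)} - v_{n-1}x^2 Q_{n-2}^{(J)}$. Upon reversal $\tilde Q_n(t) := t^n Q_n^{(J)}(1/t)$ these become monic polynomials of degree $n$, satisfying $\tilde Q_{n+1}(t) = (t + u_{n+1})\tilde Q_n(t) - v_n \tilde Q_{n-1}(t)$. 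By the classical theory of formal orthogonal polynomials (see \cite{Jones1980T}), $\{\tilde Q_n\}$ is orthogonal with respect to $L$; a short computation of $L(t\,\tilde Q_n\,\tilde Q_{n-1})$ in two ways via the recurrence yields $h_n = v_n h_{n-1}$ where $h_n := L(\tilde Q_n^2)$, and combined with $h_0 = \mu_0 = v_0$ this gives $h_n = v_0 v_1 \cdots v_n$.

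Let $T$ be the unit lower triangular matrix whose $i$-th row holds the coefficients of $\tilde Q_i$. The orthogonality $L(\tilde Q_i \tilde Q_j) = \delta_{ij} h_i$ rewrites entrywise as
\begin{equation*}
T \, H_n(\Jac(\mathbf{u},\mathbf{v})) \, T^T = \diag(h_0, h_1, \ldots, h_{n-1}),
\end{equation*}
and since $\det T = 1$ taking determinants gives $H_n(\Jac(\mathbf{u},\mathbf{v})) = \prod_{i=0}^{n-1} h_i = v_0^n v_1^{n-1} \cdots v_{n-1}$, as each $v_k$ occurs in $h_k, h_{k+1}, \ldots, h_{n-1}$ and so contributes total exponent $n-k$. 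For the Stieltjes case, the even-part contraction realizes $\Stiel(a_0, a_1, a_2, \ldots)$ as $\Jac(\mathbf{u},\mathbf{v})$ with $v_0 = a_0$, $v_k = a_{2k-1}a_{2k}$ for $k \ge 1$, and certain $u_k$ whose explicit form does not enter the determinant; since the two continued fractions agree as formal power series they share Hankel determinants, and the Jacobi formula specializes to the desired $a_0^n (a_1 a_2)^{n-1} \cdots (a_{2n-3} a_{2n-2})$.

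The main obstacle is establishing the orthogonality of $\{\tilde Q_n\}$ together with the precise normalization $L(\tilde Q_n^2) = v_0 \cdots v_n$. The cleanest route avoiding a black-box invocation of Favard is to derive orthogonality directly from the fundamental approximation property $Q_n^{(J)}(x)\,\Jac(\mathbf{u},\mathbf{v}) - P_n^{(J)}(x) = O(x^{2n+1})$, which upon comparing coefficients translates into precisely the relations $L(t^j \tilde Q_n) = 0$ for $0 \le j < n$. A purely combinatorial alternative is Flajolet's Motzkin-path interpretation \cite{Flajolet1980} of $\mu_k$ combined with the Lindström--Gessel--Viennot lemma, giving a bijective proof that the Hankel determinant equals the weight of the unique non-intersecting $n$-tuple of staircase paths, namely $\prod_{k=0}^{n-1} v_k^{n-k}$.
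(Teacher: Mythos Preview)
The paper does not prove this theorem at all: it is stated as the classical Heilermann formula and simply cited to \cite{Heilermann1846} (with further pointers to \cite{Flajolet1980, Wall1948, Stieltjes1894Re}). So there is no argument in the paper to compare yours against; you have supplied a proof where the authors supply only a reference.

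Your route---orthogonal polynomials for the Jacobi case, then the even contraction for the Stieltjes case---is one of the standard proofs and is correct in outline. The contraction step is precisely the paper's Theorem~\ref{th:Contraction}, also quoted without proof, so you are consistent with the paper's toolkit. One small inaccuracy: the approximation order you quote, $Q_n^{(J)}(x)\,\Jac(\mathbf{u},\mathbf{v}) - P_n^{(J)}(x) = O(x^{2n+1})$, is generically false---the coefficient of $x^{2n}$ is exactly $h_n = v_0 v_1\cdots v_n \neq 0$. What you actually need (and what does hold) is $O(x^{2n})$: that already gives $L(t^j\tilde Q_n)=[x^{n+j}](Q_n^{(J)}f)=0$ for $0\le j<n$, and the value of $h_n$ is then recovered from the three-term recurrence as you describe. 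With that correction the argument goes through.
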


The following contraction theorem establishes 
a link between the Stieltjes and Jacobi continued fractions
\cite{Wall1948, Perron1957II, Stieltjes1894Re}.

\begin{Theorem}\label{th:Contraction}[Contraction Theorem]
	The Stieltjes continued fraction $Stiel(\mathbf{a})$ and Jacobi continued
	fraction $Jac(\mathbf{u},\mathbf{v})$ are equal, if
	\begin{align*}
	u_1&=a_1; \\
		u_k&=a_{2k-2} + a_{2k-1}; &\text{($k\geq 2$)} \\
	v_0&=a_0; \\
		v_k&=a_{2k-1}  a_{2k}. &\text{($k\geq 1$)}
	\end{align*}
\end{Theorem}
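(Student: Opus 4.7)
The plan is to match up convergents of the two continued fractions by showing that the $n$-th convergent of $\Jac(\mathbf{u},\mathbf{v})$ coincides, as an unreduced canonical fraction, with the $(2n-1)$-st convergent of $\Stiel(\mathbf{a})$, and then take limits. To set this up, denote by $R_n(x), S_n(x)$ the numerator and denominator polynomials of the canonical representation of the $n$-th convergent of $\Jac(\mathbf{u},\mathbf{v})$, defined by $R_0 = 0, R_1 = v_0, S_0 = 1, S_1 = 1 + u_1 x$, and the three-term recurrence
$$R_n = (1 + u_n x) R_{n-1} - v_{n-1} x^2 R_{n-2}, \qquad S_n = (1 + u_n x) S_{n-1} - v_{n-1} x^2 S_{n-2},$$
which is the direct analogue of \eqref{pqdef} for the Jacobi form \eqref{eq:fr:Jacobi}.

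The key claim to establish, by induction on $n \geq 1$, is the identity
$$P_{2n-1}(x) = R_n(x), \qquad Q_{2n-1}(x) = S_n(x),$$
under the correspondence $u_1 = a_1$, $u_k = a_{2k-2}+a_{2k-1}$, $v_0 = a_0$, $v_k = a_{2k-1} a_{2k}$ given in the theorem. The cases $n=1,2$ follow from direct computation of the first few $P_i, Q_i$ from \eqref{pqdef} and of $R_1, R_2, S_1, S_2$ from the above recurrence.

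For the inductive step, I would combine two consecutive Stieltjes relations. From $P_{2n+1} = P_{2n} + a_{2n+1} x P_{2n-1}$, $P_{2n} = P_{2n-1} + a_{2n} x P_{2n-2}$, and $P_{2n-2} = P_{2n-1} - a_{2n-1} x P_{2n-3}$, a routine elimination of $P_{2n}$ and $P_{2n-2}$ yields
$$P_{2n+1} = \bigl(1 + (a_{2n} + a_{2n+1}) x\bigr) P_{2n-1} - a_{2n-1}a_{2n} x^2 P_{2n-3} = (1 + u_{n+1} x) P_{2n-1} - v_n x^2 P_{2n-3},$$
which is exactly the Jacobi recurrence satisfied by $R_{n+1}$. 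The identical manipulation works for the denominators, so the induction closes. Taking $n \to \infty$, Theorem \ref{th:fr:convergence} guarantees convergence of $P_{2n-1}/Q_{2n-1}$ to $\Stiel(\mathbf{a})$, and since this same subsequence is by construction the sequence of convergents of $\Jac(\mathbf{u},\mathbf{v})$, the two infinite continued fractions are equal as formal power series.

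Conceptually the argument is clean: one pairs up each block of two Stieltjes levels into a single Jacobi level. The only real work is careful index bookkeeping — matching Stieltjes indices $(2n-1,2n,2n+1)$ to the Jacobi index $n+1$, and tracking the shifts in the correspondence $u_k = a_{2k-2}+a_{2k-1}$, $v_k = a_{2k-1}a_{2k}$ — but no conceptual obstacle arises.
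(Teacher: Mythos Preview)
Your argument is correct and is the standard contraction argument for passing from a Stieltjes to a Jacobi continued fraction: pair two successive Stieltjes steps into one Jacobi step, verify that the odd-indexed Stieltjes convergents satisfy the Jacobi three-term recurrence, and take the limit along this subsequence. The algebra in your inductive step checks out, as do the base cases $n=1,2$.

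Note, however, that the paper does not supply its own proof of Theorem~\ref{th:Contraction}: it is quoted as a classical result with references to Wall, Perron, and Stieltjes. So there is no in-paper proof to compare against; your write-up is essentially the textbook argument those references contain.
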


Using the above notation, the two power series $C(x)$ and $D(x)$ defined in Section~\ref{sec:Intro} can be written as
$C(x)=\Stiel(\mathbf{t})$ and $D(x)=\Stiel(\mathbf{s})$.

\section{Thue-Morse continued fraction}\label{sec:tm} 

First we consider the $n$-th convergent $P_n(x)/Q_n(x)$ of the Thue-Morse 
continued fraction $C(x)$. Making use of the structure of the Thue-Morse
sequence, we establish the following recurrence relations of 
$P_n$ and $Q_n$.
\begin{Lemma}\label{th:pq}
	Let $P_n(x)/Q_n(x)$ be the canonical representation of $\Stiel_n(a)$. 
The two sequences $P_n(x)$ and $Q_n(x)$
are characterized by the initial conditions
$$P_0(x)=P_1(x)=Q_0(x)=1,\ Q_1(x)=1-x$$
  and
the following recurrence relations for $m\geq 1$ and $1\leq \epsilon\leq 2^m$:
\begin{align*}
  U_{2^{m+1}-\epsilon}(x)&=Q_{2^m-\epsilon}(-x)U_{2^m-1}(x)-x P_{2^m-\epsilon}(-x)U_{2^m-2}(x),
\end{align*}
where $U$ is either of the sequences $P$ or $Q$.
\end{Lemma}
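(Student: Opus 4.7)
The plan is to prove the recurrence by reducing it, via the general convergent identities \eqref{test} and \eqref{test2}, to a structural symmetry of the Thue--Morse sequence.

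First I would verify the initial conditions directly from $a_0=t_0=1$ and $a_1=t_1=-1$, which give $P_0=Q_0=P_1=1$ and $Q_1=1-x$. Then, for the recurrence itself, I would specialise \eqref{test} and \eqref{test2} at $k=2^m$ and $n=2^{m+1}-\epsilon$, where $m\ge 1$ and $1\le\epsilon\le 2^m$ (so that $0\le k<n$, with the boundary case $\epsilon=2^m$ collapsing to the ordinary three-term recurrence of \eqref{pqdef}, since $t_{2^m}=-1$). With this choice,
\begin{equation*}
U_{2^{m+1}-\epsilon}(x) = Q(x)\, U_{2^m-1}(x) + x\, P(x)\, U_{2^m-2}(x),
\end{equation*}
where $P(x)/Q(x)$ denotes the canonical representation of the ``tail'' Stieltjes continued fraction $[\![t_{2^m},t_{2^m+1},\dots,t_{2^{m+1}-\epsilon}]\!]$. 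This reduces the problem to identifying $P$ and $Q$ explicitly in terms of $P_{2^m-\epsilon}$ and $Q_{2^m-\epsilon}$.

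The next step uses the defining Thue--Morse recurrences to prove the well-known symmetry $t_{2^m+i}=-t_i$ for $0\le i<2^m$, by an immediate induction on $m$ (splitting on the parity of $i$). Hence the tail equals $[\![-t_0,-t_1,\dots,-t_{2^m-\epsilon}]\!]$. I would then establish the sign-flip identity: if $\mathbf{a}=(a_n)$ has canonical polynomials $P^{(\mathbf{a})}_n(x),Q^{(\mathbf{a})}_n(x)$, then the sequence $-\mathbf{a}$ has canonical polynomials
\begin{equation*}
P^{(-\mathbf{a})}_n(x) = -P^{(\mathbf{a})}_n(-x), \qquad Q^{(-\mathbf{a})}_n(x) = Q^{(\mathbf{a})}_n(-x).
\end{equation*}
This is a short verification: both sides match on the initial indices $n=0,1$, and by \eqref{pqdef} both sides satisfy the same three-term recurrence $U_n=U_{n-1}-a_n x\,U_{n-2}$, so equality for all $n\ge 0$ follows by induction. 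Applying this to $\mathbf{a}=(t_0,\dots,t_{2^m-\epsilon})$ yields $P(x)=-P_{2^m-\epsilon}(-x)$ and $Q(x)=Q_{2^m-\epsilon}(-x)$, and substitution into the displayed identity above produces exactly the claimed formula.

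The only genuinely delicate point is the bookkeeping of signs in the sign-flip identity: the substitution $x\mapsto -x$ produces two sign changes (one from each $a_i\mapsto -a_i$ and one from $x\mapsto -x$), which exactly cancel in the three-term recurrence for $P$ and $Q$, but must be carefully tracked on the initial conditions, where the numerator picks up a minus sign while the denominator does not. Once this identity is in hand, the lemma follows uniformly for all $1\le\epsilon\le 2^m$, including the boundary case $\epsilon=2^m$ in which the right-hand side collapses to the basic three-term recurrence governed by $t_{2^m}=-1$.
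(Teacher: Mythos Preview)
Your proof is correct and follows essentially the same approach as the paper's: both apply \eqref{test}--\eqref{test2} with the split at $k=2^m$, use the Thue--Morse symmetry $t_{2^m+i}=-t_i$ to identify the tail with $[\![-t_0,\dots,-t_{2^m-\epsilon}]\!]$, and then invoke the sign-flip identity $P^{(-\mathbf a)}_n(x)=-P^{(\mathbf a)}_n(-x)$, $Q^{(-\mathbf a)}_n(x)=Q^{(\mathbf a)}_n(-x)$ to conclude. Your write-up is slightly more explicit (you spell out the induction for the sign-flip identity and handle the boundary $\epsilon=2^m$ separately), but the argument is the same.
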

\begin{proof}
	For a fixed $1\leq \epsilon\leq 2^m$, let $P(x)/Q(x)$ be the canonical 
	representation of the Stieltjes continued fraction 
	$[\![t_{2^m}, t_{2^m+1}, \ldots, t_{2^{m+1}-\epsilon}]\!]$.
From the definition of the Thue-Morse sequence, we see that $t_n=1$ if the
number of $1$'s in the binary expansion of $n$ is even, and $t_n=-1$ 
otherwise, and therefore
	$t_{2^m+j}=-t_j$ for all $m\geq 0$ and  $0\leq j\leq 2^m-1$. 
	Hence $P(x)/Q(x)$ is in fact the canonical representation of 
	$[\![-t_0,-t_1,\ldots,-t_{2^m-\epsilon}]\!]$. By \eqref{pqdef}, 
	$P(x)=-P_{2^m-\epsilon}(-x)$ and
	$Q(x)=Q_{2^m-\epsilon}(-x)$.
	Using formula \eqref{test} and \eqref{test2} we get the desired result.
\end{proof}

From the above recurrence relations of $P_n(x)$ and $Q_n(x)$, we are able to 
derive by induction the explicit expression of $P_{2^{2m}-2}(x)$ and
$Q_{2^{2m}-2}(x)$, which we will then use to calculate
$C(x)=\lim P_n(x)/Q_n(x)$.

To simplify notations, we define, for $m\geq 0$,
\begin{equation*}
	S_m(x)=\sum_{j=0}^{m-1} x^{2^j},\quad
	S^e_m(x)=\sum_{j=0}^{m-1} x^{2^{2j}},\quad
	S^o_m(x)=\sum_{j=0}^{m-1} x^{2^{2j+1}},
\end{equation*}
and 
\begin{equation*}
	S_\infty(x)=\sum_{j=0}^{\infty} x^{2^j},\quad
	S^e_\infty(x)=\sum_{j=0}^{\infty} x^{2^{2j}},\quad
	S^o_\infty(x)=\sum_{j=0}^{\infty} x^{2^{2j+1}}.
\end{equation*}
If the parameter is $x$, we write without the parameter as $S_m:=S_m(x)$, etc.
Recall that the Kronecker delta symbol $\delta_{i,j}$ is 1 if $i=j$, and 0 otherwise.

We are only interested in 3) and 7) from the following proposition, 
but we need the others for the proof by induction.
\begin{Proposition}\label{th:explicit}
We have the following explicit values for the polynomials $P_n(x)$ and $Q_n(x)$
for $n=2^k-1$ and $n=2^k-2$.\\

	1) $	P_{2^{2m}-1}(x)\equiv1+2 S^o_{m-1}(x)\pmod{4};  (m\geq 1)\label{eq:p1}$\\

	2) $	P_{2^{2m+1}-1}(x)\equiv1+2x (1-\delta_{m,0})+2S^e_m(x)\pmod{4}; (m\geq 0)\label{eq:p2}$\\

	3) $P_{2^{2m}-2}(x)\equiv   1 + x^{-1} S_{2m-1}(x)^2 - 2 S^e_m(x)\pmod{4}; 
	                       (m\geq 1)\label{eq:p3}$\\

	4) $P_{2^{2m+1}-2}(x)\equiv   1 + x^{-1} S_{2m}(x)^2 - 2 S^o_m(x)\pmod{4}; 
												 (m\geq 0)\label{eq:p4}$\\

	5) $Q_{2^{2m}-1}(x)\equiv   1 -x + 2 x^{2^{2m-1}}
		 - S_{2m-1}(x)^2 + 2x S^e_m(x)\pmod{4};(m\geq 1) \label{eq:q1}$\\
		 
	6) $Q_{2^{2m+1}-1}(x)\equiv 1 -x+   2 x^{2^{2m}} (1-\delta_{m,0})		 
		 - S_{2m}(x)^2 + 2x S^o_m(x)\pmod{4};(m\geq 0) \label{eq:q2}$\\

	7) $Q_{2^{2m}-2}(x)\equiv1+2 S_{2m-1}(x)\pmod{4}; (m\geq 1)\label{eq:q3}$\\

	8) $Q_{2^{2m+1}-2}(x)\equiv1+2x(1-\delta_{m,0})+2 S_{2m}(x)\pmod{4}. ( m\geq 0)$\label{eq:q4}
\end{Proposition}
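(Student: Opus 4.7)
The plan is to verify all eight congruences by a single simultaneous induction whose step advances from level $m$ (the four polynomials $P_{2^m-1}, P_{2^m-2}, Q_{2^m-1}, Q_{2^m-2}$) to level $m+1$. This bundling is forced by Lemma~\ref{th:pq}: its two specializations $\epsilon = 1$ and $\epsilon = 2$ express each of $P_{2^{m+1}-\epsilon}$ and $Q_{2^{m+1}-\epsilon}$ as a bilinear combination of $P_{2^m-\epsilon}(-x), Q_{2^m-\epsilon}(-x)$ with $P_{2^m-1}(x), Q_{2^m-1}(x), P_{2^m-2}(x), Q_{2^m-2}(x)$, so advancing any single formula requires all four companions at the previous level to be known modulo $4$. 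Since formulas 1), 3), 5), 7) concern the even exponents $2^{2m}$ while formulas 2), 4), 6), 8) concern the odd exponents $2^{2m+1}$, the two blocks leapfrog: one inductive step derives the odd-exponent formulas from the even ones, and the next does the reverse.

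For the base case I would check the $m=0$ instances of formulas 2), 4), 6), 8) directly from the initial conditions $P_0 = Q_0 = P_1 = 1$ and $Q_1 = 1 - x$. The inductive step substitutes the eight hypotheses at level $m$ into the two identities of Lemma~\ref{th:pq} and expands modulo $4$. All of the necessary algebra reduces to a handful of elementary identities for the auxiliary sums: the substitution rules $S_k(-x) \equiv S_k(x) - 2x \pmod 4$, $S^e_k(-x) \equiv S^e_k(x) - 2x \pmod 4$, and $S^o_k(-x) = S^o_k(x)$, valid for $k \ge 1$; the decompositions $S_{2m} = S^e_m + S^o_m$ and $S_{2m+1} = S^e_{m+1} + S^o_m$; the squaring recursion $S_{k+1}^2 = S_k^2 + x^{2^{k+1}} + 2 x^{2^k} S_k$; and the absorption $(2a)(2b) \equiv 0 \pmod 4$, which wipes out most cross terms between the $2 S_\bullet$ corrections.

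The hard part will be the mod-$4$ bookkeeping in the verification of formulas 5) and 6), which are by far the most intricate. After substitution, these produce large products involving all of $1 - x$, $-S_k^2$, $2x^{2^{k-1}}$, $x^{-1}S_k^2$, $2S^e_j$, and $2S^o_j$; these must collapse into the target expression $1 - x + 2x^{2^{k}} - S_{k+1}^2 + 2x S^e_{j+1}$ (and its odd variant), which requires tracking how the leading $x$ in the recurrence of Lemma~\ref{th:pq} cancels the $x^{-1}$ sitting inside the $P$-formula, and how the remaining quadratic cross terms rebuild the shifted square $S_{k+1}^2$ via the squaring recursion above. Once that identification is carried out cleanly, the other six formulas follow from shorter variants of the same computation, completing the induction.
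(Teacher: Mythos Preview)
Your proposal is correct and follows essentially the same approach as the paper's proof: a simultaneous induction via Lemma~\ref{th:pq} that advances the four polynomials $P_{2^k-1}, P_{2^k-2}, Q_{2^k-1}, Q_{2^k-2}$ from one level to the next, alternating between the even- and odd-exponent blocks, with the mod-$4$ algebra reduced to exactly the sign/squaring identities for $S_k$, $S^e_k$, $S^o_k$ that you list. The paper differs only in that it takes both $m=0$ and $m=1$ as base cases (so the Kronecker deltas in formulas 2), 6), 8) are inactive throughout the inductive step) and then writes out all eight mod-$4$ expansions explicitly.
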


\begin{proof}
We prove this result by induction on 
	$$n\in  \{2^k-1 \mid k\} \cup \{ 2^k-2 \mid k\}.$$
	When we compute $P_n(x)$ or $Q_n(x)$, the induction hypothesis
	is that the expressions for $P_\ell(x)$ and $Q_\ell(x)$ are true for $\ell <n$ and
	$\ell\in  \{2^k-1 \mid k\} \cup \{ 2^k-2 \mid k\}$.
	Relations 1) - 8) are true for $m\equiv 0$ or $m\equiv 1$. In the sequel let $m\geq 2$.

	1)  Using the induction hypothesis, we have
\begin{align*}
	Q_{2^{2m-1}-1}(-x)	  
	&\equiv 1 +x+   2 x^{2^{2m-2}} - S_{2m-2}(x)^2 - 2x S^o_{m-1}(x);\\
	P_{2^{2m-1}-1}(x)& \equiv 1+2x +2S^e_{m-1}(x);\\
	P_{2^{2m-1}-1}(-x)&\equiv 1-2x +2S^e_{m-1}(x);\\
	P_{2^{2m-1}-2}(x)&\equiv    1 + x^{-1} S_{2m-2}(x)^2 - 2 S^o_{m-1}(x).
\end{align*}
By Lemma \ref{th:pq}, we obtain
\begin{align*}
	P_{2^{2m}-1}&\equiv Q_{2^{2m-1}-1}(-x)P_{2^{2m-1}-1}(x)-x P_{2^{2m-1}-1}(-x)P_{2^{2m-1}-2}(x)\\
	&\equiv  	(1+2x +2S^e_{m-1}(x))  \\
		&  \quad \times \bigl( (1 +x+   2 x^{2^{2m-2}} - S_{2m-2}(x)^2 - 2x S^o_{m-1}(x))\\
	& \qquad -x 
	(   1 + x^{-1} S_{2m-2}(x)^2 - 2 S^o_{m-1}(x)) \bigr)\\
	&\equiv  	(1+2x +2S^e_{m-1}(x))  
		 ( 1 +   2 x^{2^{2m-2}}  - 2S_{2m-2}(x)^2  )\\
	&\equiv  	1+2x +2S^e_{m-1}(x) +   2 x^{2^{2m-2}}  - 2S_{2m-2}(x)^2  \\
	&\equiv  	1 +2S^e_{m-1}(x) +   2 x^{2^{2m-2}}  - 2S_{2m-1}(x)  \\
	&\equiv  	1 +2S^o_{m-1}(x).
\end{align*}
	2)  Using the induction hypothesis, we have
\begin{align*}
	Q_{2^{2m}-1}(-x)	 
	&\equiv    1 +x + 2 x^{2^{2m-1}} - S_{2m-1}(x)^2 + 2x S^e_m(x); \\
	P_{2^{2m}-1}(x)& \equiv 1+2S^o_{m-1}(x);\\
	P_{2^{2m}-1}(-x)&\equiv 1+2S^o_{m-1}(x);\\
	P_{2^{2m}-2}(x)&\equiv    1 + x^{-1} S_{2m-1}(x)^2 - 2 S^e_m(x).
\end{align*}

By Lemma \ref{th:pq}, we obtain
\begin{align*}
	P_{2^{2m+1}-1}&\equiv Q_{2^{2m}-1}(-x)P_{2^{2m}-1}(x)-x P_{2^{2m}-1}(-x)P_{2^{2m}-2}(x)\\
	&\equiv  	(1+2S^o_{m-1}(x))  \\
		&  \quad \times \bigl( (1 +x+   2 x^{2^{2m-1}} - S_{2m-1}(x)^2 + 2x S^e_{m}(x))\\
	& \qquad -x 
	(   1 + x^{-1} S_{2m-1}(x)^2 - 2 S^e_{m}(x)) \bigr)\\
	&\equiv  	(1+2S^o_{m-1}(x))  
		 ( 1 +   2 x^{2^{2m-1}}  - 2S_{2m-1}(x)^2  )\\
	&\equiv  	1 +2S^o_{m-1}(x) +   2 x^{2^{2m-1}}  - 2S_{2m-1}(x)^2  \\
	&\equiv  	1 +2S^o_{m-1}(x) +   2 x^{2^{2m-1}}  - 2S_{2m}(x) +2x \\
	&\equiv  	1 +2x+  2S^e_{m}(x).
\end{align*}
	3)  Using the induction hypothesis, we have
\begin{align*}
	Q_{2^{2m-1}-2}(x)&\equiv 1+2x+2 S_{2m-2};\\
	P_{2^{2m-1}-1}(x)& \equiv 1+2x +2S^e_{m-1};\\
	P_{2^{2m-1}-2}(x)&\equiv    1 + x^{-1} S_{2m-2}^2 - 2 S^o_{m-1};\\
	P_{2^{2m-1}-2}(-x)&\equiv    1 - x^{-1} S_{2m-2}^2 - 2 S^o_{m-1}.
\end{align*}
By Lemma \ref{th:pq}, we obtain
\begin{align*}
P_{2^{2m}-2}(x)
&\equiv Q_{2^{2m-1}-2}(-x)P_{2^{2m-1}-1}(x)
		 -x P_{2^{2m-1}-2}(-x)P_{2^{2m-1}-2}(x) \\
&\equiv  (1+2x+2 S_{2m-2}) (1+2x +2S^e_{m-1})\\
&\quad -x ( 1 + x^{-1} S_{2m-2}^2 - 2 S^o_{m-1})
 	 (   1 - x^{-1} S_{2m-2}^2 - 2 S^o_{m-1})\\
&\equiv  1+2 S_{2m-2} + 2S^e_{m-1}
	 -x \bigl( 1 - 2 S^o_{m-1} \bigr)^2
	 +x \bigl(  x^{-1} S_{2m-2}^2  \bigr)^2\\
&\equiv  1+2 S_{2m-2} + 2S^e_{m-1} -x +x^{-1}   (S_{2m-1} -x )^2\\
	&\equiv  1 +  x^{-1} S_{2m-1}^2+ 2S^e_{m}. 
\end{align*}
4)  Using the induction hypothesis, we have
\begin{align*}
	P_{2^{2m}-1}(x)&\equiv 1+2 S^o_{m-1}(x);\\
	P_{2^{2m}-2}(x)&\equiv    1 + x^{-1} S_{2m-1}(x)^2 - 2 S^e_m(x); \\
	Q_{2^{2m}-2}(x)&\equiv 1+2 S_{2m-1}(x).
\end{align*}
By Lemma \ref{th:pq}, we obtain
\begin{align*}
	P_{2^{2m+1}-2}(x)
	&\equiv Q_{2^{2m}-2}(-x)P_{2^{2m}-1}(x)
	-xP_{2^{2m}-2}(-x)P_{2^{2m}-2}(x)\\
	&\equiv (1+2S_{2m-1}(-x))(1+2S^o_{m-1}(x))\\
	&\quad -x(1-x^{-1}S_{2m-1}(-x)^2-2S^e_m(-x))(1+x^{-1}S_{2m-1}(x)^2-2S^e_m(x))\\
	&\equiv 1+2S_{2m-1}(x)+2S^o_{m-1}(x)-x+x^{-1}S_{2m-1}(x)^4\\
	&\equiv 1+2S_{2m-1}(x)+2S^o_{m-1}(x)-x+x^{-1}(S_{2m}(x)^2-2xS_{2m}(x)+x^2)\\
	&\equiv 1+x^{-1}S_{2m}(x)^2-2S^o_m(x).
\end{align*}
5) Using the induction hypothesis, we have
\begin{align*}
	P_{2^{2m-1}-1}(x)&\equiv 1+2x +2S^e_{m-1}(x);\\
	Q_{2^{2m-1}-1}(x)&\equiv  1 -x+   2 x^{2^{2m-2}} 		 - S_{2m-2}(x)^2 + 2x S^o_{m-1}(x);\\
	Q_{2^{2m-1}-2}(x)&\equiv 1+2x+2 S_{2m-2}(x).
\end{align*}
By Lemma \ref{th:pq}, we obtain
\begin{align*}
	Q_{2^{2m}-1}(x)&\equiv Q_{2^{2m-1}-1}(-x)Q_{2^{2m-1}-1}(x)-xP_{2^{2m-1}-1}(-x)Q_{2^{2m-1}-2}(x)\\
	&\equiv (1+x+2x^{2^{2m-2}}-S_{2m-2}^2+2xS^o_{m-1})\\
	&\quad (1-x+2x^{2^{2m-2}}-S_{2m-2}^2+2xS^o_{m-1})\\
	&\quad -x(1+2x+2S^e_{m-1})(1+2x+2S_{2m-2})\\
	&\equiv (1-S^2_{2m-2})^2-x^2-(x+2xS^e_{m-1}+2xS_{2m-2})\\
	&\equiv 1+2S^2_{2m-2}+S^2_{2m-1}+2xS_{2m-1}-(x+2xS^e_{m-1}+2xS_{2m-2})\\
	&\equiv 1-x+2x^{2^{2m-1}}-S^2_{2m-1}+2xS^e_m.
\end{align*}
6) Using the induction hypothesis, we have
\begin{align*}
	P_{2^{2m}-1}(x)&\equiv 1+2 S^o_{m-1}(x);\\
	Q_{2^{2m}-1}(x)&\equiv    1 -x + 2 x^{2^{2m-1}}
		 - S_{2m-1}(x)^2 + 2x S^e_m(x);\\
	Q_{2^{2m}-2}(x)&\equiv 1+2 S_{2m-1}(x).
\end{align*}
By Lemma \ref{th:pq}, we obtain
\begin{align*}
	Q_{2^{2m+1}-1}(x)&\equiv Q_{2^{2m}-1}(-x)Q_{2^{2m}-1}(x)-x P_{2^{2m}-1}(-x)Q_{2^{2m}-2}(x)\\
	&\equiv (1+x+2x^{2^{2m-1}}-S_{2m-1}^2+2xS_m^e)\\
	&\equiv (1-x+2x^{2^{2m-1}}-S_{2m-1}^2+2xS^e_m)\\
	&\quad -x(1+2S^o_{m-1})(1+2S_{2m+1})\\ 
	&\equiv 1+3S^2_{2m}+2xS_{2m}+2x^{2^{2m}}-x+2xS^o_{m-1}+2xS_{2m-1}\\
	&\equiv 1-x+2x^{2^{2m}}-S_{2m}^2+2xS^o_m.
\end{align*}
7) Using the induction hypothesis, we have
\begin{align*}
	P_{2^{2m-1}-2}(x)&\equiv    1 + x^{-1} S_{2m-2}(x)^2 - 2 S^o_{m-1}(x); \\
	Q_{2^{2m-1}-1}(x)&\equiv  1 -x+   2 x^{2^{2m-2}}
		 - S_{2m-2}(x)^2 + 2x S^o_{m-1}(x);\\
	Q_{2^{2m-1}-2}(x)&\equiv 1+2x+2 S_{2m-2}(x).
\end{align*}
By Lemma \ref{th:pq}, we obtain
\begin{align*}
	&\quad	Q_{2^{2m}-2}(x)\\
&\equiv Q_{2^{2m-1}-2}(-x)Q_{2^{2m-1}-1}(x)-xP_{2^{2m-1}-2}(-x)Q_{2^{2m-1}-2}(x)\\
	&\equiv (1+2x+2S_{2m-2})(1-x+2x^{2^{2m-2}}-S_{2m-2}^2+2xS^o_{m-1})\\
	&\quad -x(1-x^{-1}S_{2m-2}^2+2S^o_{m-1})(1+2x+2S_{2m-2})\\
	&\equiv 1+x+2x{2^{2m-2}}-S_{2m-2}^2+2xS^o_{m-1}+2x^2+2xS_{2m-2}^2+2S_{2m-2}+2xS_{2m-2}\\
	&\quad+2S_{2m-2}^3
	 -x(1+2x+2S_{2m-2}-x^{-1}S^2_{2m-2}+2S^2_{2m-2}+2x^{-1}S^3_{2m-2}+2S^o_{m-1})\\
	&\equiv 1+2S_{2m-1}.
\end{align*}
8) Using the induction hypothesis, we have
\begin{align*}
	P_{2^{2m}-2}(x)&\equiv    1 + x^{-1} S_{2m-1}(x)^2 - 2 S^e_m(x); \\
	Q_{2^{2m}-1}(x)&\equiv    1 -x + 2 x^{2^{2m-1}}
		 - S_{2m-1}(x)^2 + 2x S^e_m(x); \\
	Q_{2^{2m}-2}(x)&\equiv 1+2 S_{2m-1}(x); 
\end{align*}
By Lemma \ref{th:pq}, we obtain
\begin{align*}
	Q_{2^{2m+1}-2}(x)&\equiv Q_{2^{2m}-2}(-x)Q_{2^{2m}-1}(x)-xP_{2^{2m}-2}(-x)Q_{2^{2m}-2}(x)\\
	&\equiv (1+2S_{2m-1})(1-x+2x^{2^{2m-1}}-S_{2m-1}^2+2xS^e_m)\\
	&\quad -x(1-x^{-1}S^2_{2m-1}+2S^e_m)(1+2S_{2m-1})\\
	&\equiv 1-x+2x^{2^{2m-1}}-S^2_{2m-1}+2xS^e_m+2S_{2m-1}+2xS_{2m-1}+2S^3_{2m-1}\\
	&\quad -(x-S_{2m-1}^2+2xS^e_m +2xS_{2m-1}+2S^3_{2m-1})\\
	&\equiv 1+2x+2S_{2m-1}+2x^{2^{2m-1}}\\
	&\equiv 1+2x+2S_{2m}.\qedhere
\end{align*}

\end{proof}

The explicit expressions of $P_{2^{2m}-2}(x)$ and $P_{2^{2m}-2}(x)$ give the
explicit expression for $C(x)$.
 
\begin{Proposition}\label{prop:c}
	\begin{equation}\label{eq:cexplicit}
  C(x) \equiv 1-  \sum_{i,j=0}^\infty x^{2^i+2^j-1} +2 \sum_{k=0}^\infty x^{2^{2k}}
		\pmod{4}.
	\end{equation}
\end{Proposition}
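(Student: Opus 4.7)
The plan is to pass to the limit using parts 3) and 7) of Proposition \ref{th:explicit}. Since $C(x) = \lim_{m\to\infty} P_n(x)/Q_n(x)$ and in particular $C(x) = \lim_{m\to\infty} P_{2^{2m}-2}(x)/Q_{2^{2m}-2}(x)$ in $\mathbb{Z}[[x]]$, and since the partial sums $S_{2m-1}(x), S^e_m(x)$ converge in $\mathbb{Z}[[x]]$ to $S_\infty(x), S^e_\infty(x)$ (the new terms involve arbitrarily high powers of $x$), it suffices to invert the denominator mod $4$ and read off the product in the limit.

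First I would invert $Q_{2^{2m}-2}(x) \equiv 1 + 2 S_{2m-1}(x) \pmod 4$. Using the identity $(1+2A)(1-2A) = 1 - 4A^2 \equiv 1 \pmod 4$ valid for any $A \in \mathbb{Z}[[x]]$, one gets $Q_{2^{2m}-2}(x)^{-1} \equiv 1 - 2 S_{2m-1}(x) \pmod 4$. Multiplying by the formula from part 3) and passing to the limit (the $4 S^e_\infty S_\infty$ cross-term vanishes mod $4$) yields
\begin{equation*}
C(x) \;\equiv\; \bigl(1 + x^{-1} S_\infty^2 - 2 S^e_\infty\bigr)\bigl(1 - 2 S_\infty\bigr) \;\equiv\; 1 + x^{-1}S_\infty^2 - 2 S^e_\infty - 2 S_\infty - 2 x^{-1} S_\infty^3 \pmod 4.
\end{equation*}

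The remaining task is to rewrite $2 x^{-1} S_\infty^3 \pmod 4$ and match the target. The key tool is the Frobenius-type identity
\begin{equation*}
S_\infty(x)^2 = \sum_{i\geq 0} x^{2^{i+1}} + 2\!\!\sum_{0 \leq i < j} x^{2^i + 2^j} \;\equiv\; S_\infty(x) - x \pmod 2,
\end{equation*}
which upgrades to $2 S_\infty^2 \equiv 2 S_\infty - 2x \pmod 4$. Multiplying by $S_\infty$ gives $2 S_\infty^3 \equiv 2 S_\infty - 2x - 2x S_\infty \pmod 4$, hence $2 x^{-1} S_\infty^3 \equiv 2 x^{-1} S_\infty - 2 - 2 S_\infty \pmod 4$. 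Substituting back collapses the expression to $-1 + x^{-1} S_\infty^2 - 2 S^e_\infty - 2 x^{-1} S_\infty \pmod 4$.

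Finally I would verify this equals $1 - x^{-1} S_\infty^2 + 2 S^e_\infty \pmod 4$ (the target, after noting $\sum_{i,j\geq 0} x^{2^i+2^j-1} = x^{-1} S_\infty^2$ and $\sum_{k\geq 0} x^{2^{2k}} = S^e_\infty$). Their difference is $-2 + 2 x^{-1}(S_\infty^2 - S_\infty) \pmod 4$; dividing by $2$ and applying the same Frobenius identity $S_\infty^2 \equiv S_\infty - x \pmod 2$ shows this is $0 \pmod 2$, so the difference is $0 \pmod 4$. The only real obstacle here is the bookkeeping in the cubic reduction and the final mod-$2$/mod-$4$ consistency check; everything else is formal manipulation granted the explicit expressions of Proposition \ref{th:explicit}.
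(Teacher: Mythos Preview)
Your proof is correct and follows essentially the same route as the paper: both invoke parts 3) and 7) of Proposition~\ref{th:explicit}, invert $1+2S_{2m-1}$ modulo $4$, pass to the limit, and simplify the cubic term via the Frobenius-type identity $S_\infty^2 \equiv S_\infty - x \pmod 2$. The only cosmetic difference is that the paper uses $(1+2A)^{-1}\equiv 1+2A$ (equivalent to your $1-2A$ mod $4$) and simplifies $2x^{-1}S_\infty^3 \equiv 2x^{-1}S_\infty^2 - 2S_\infty$ in one step, reaching $1+3x^{-1}S_\infty^2+2S^e_\infty \equiv 1-x^{-1}S_\infty^2+2S^e_\infty$ directly without your final mod-$2$ verification; your double application of the Frobenius identity plus the closing check is slightly longer but equally valid.
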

\begin{proof}
By Theorem \ref{th:fr:convergence}, 
$$C(x)=\lim_{m\rightarrow \infty}P_{2^{2m}-2}(x)/Q_{2^{2m}-2}(x).$$
	The constant term of $Q_{2^{2m}+2}(x)$ being $1$, 
	$1/Q_{2^{2m}+2}(x)$ belongs to $\mathbb{Z}[[x]]$. By 3) and 7) of Proposition
	\ref{th:explicit},
\begin{align*}
C(x)
  &\equiv
\lim_{m \rightarrow \infty} \frac{ 1 + x^{-1} S_{2m-1}(x)^2 - 2 S^e_m(x)}{ 1+2 S_{2m-1}(x) } \\
  &\equiv
  \lim_{m \rightarrow \infty} ( 1 + x^{-1} S_{2m-1}(x)^2 - 2 S^e_m(x))( 1+2 S_{2m-1}(x) ) \\
  &\equiv
  \lim_{m \rightarrow \infty}  1 + x^{-1} S_{2m-1}(x)^2 - 2 S^e_m(x)
  + 2 S_{2m-1}(x)   + 2x^{-1} S_{2m-1}(x)^3   \\
	&\equiv 1+x^{-1}(\sum\limits_{j=0}^\infty x^{2^j})^2+2\sum\limits_{j=0}^\infty
	x^{2^{2j}}+2\sum\limits_{j=0}^\infty x^{2^j}+2x^{-1}(\sum\limits_{j=0}^\infty x^{2^j})^3\\
	&\equiv 1+x^{-1}(\sum\limits_{j=0}^\infty x^{2^j})^2+2\sum\limits_{j=0}^\infty
	x^{2^{2j}}+2\sum\limits_{j=0}^\infty x^{2^j}+2x^{-1}(\sum\limits_{j=1}^\infty
	x^{2^j})(\sum\limits_{j=0}^\infty x^{2^j})\\
	&\equiv 1+x^{-1}(\sum\limits_{j=0}^\infty x^{2^j})^2+2\sum\limits_{j=0}^\infty
	x^{2^{2j}}+2\sum\limits_{j=0}^\infty x^{2^j}+2x^{-1}(\sum\limits_{j=0}^\infty
	x^{2^j})^2+2\sum\limits_{j=0}^\infty x^{2^j}\\
	&\equiv 1-x^{-1}(\sum\limits_{j=0}^\infty x^{2^j})^2+2\sum\limits_{j=0}^\infty
	x^{2^{2j}}\\
	&\equiv 1-  \sum_{i,j=0}^\infty x^{2^i+2^j-1} +2 \sum_{k=0}^\infty x^{2^{2k}}
	\pmod{4}.\qedhere
\end{align*}
\end{proof}

Now we prove Theorem \ref{th:mainmod4} by repeated applying the Cartier 
operators to the right hand side of \eqref{eq:cexplicit}.

\begin{proof}[Proof of Theorem \ref{th:mainmod4}]
	We recall that $\bar{C}(x)$ denotes the series in $\mathbb{Z}/4\mathbb{Z}[[x]]$
	that is the reduction modulo $4$ of $C(x)$.
	We prove that by applying $\Lambda_0$ and $\Lambda_1$ repeatedly to $\bar{C}(x)$, 
	we can only obtain a finite number of series. Indeed, we have
	$$\bar{C}(x)=:f_0,$$
	$$\Lambda_0 f_0=\Lambda_0 \bar{C}(x)=1+2\sum\limits_{j=0}^{\infty}x^{2^j}+2\sum\limits_{k=0}^
	{\infty}x^{2^{2k+1}}=1+2\sum\limits_{j=0}^\infty x^{2^{2j}}=:f_1,$$
	$$\Lambda_1f_0=\Lambda_1 \bar{C}(x)=-1-\sum\limits_{i,j=0}^\infty x^{2^i+2^j-1}+2
	=1-\sum\limits_{i,j=0}^\infty x^{2^i+2^j-1}=:f_2,$$
	$$\Lambda_0f_1=\Lambda_0\Bigl(1+2\sum\limits_{j=0}^\infty x^{2^{2j}}\Bigr)=1+2\sum\limits_{j=0}^\infty x^{2^{2j+1}}=:f_3,$$
	$$\Lambda_1f_1=\Lambda_1\Bigl(1+2\sum\limits_{j=0}^\infty x^{2^{2j}}\Bigr)=2=:f_4,$$
	$$\Lambda_0f_2=\Lambda_0\Bigl(1-\sum\limits_{i,j=0}^\infty x^{2^i+2^j-1}\Bigr)=1+2\sum\limits_{j=0}^\infty x^{2^j}=:f_5,$$
	$$\Lambda_1f_2=\Lambda_1\Bigl(1-\sum\limits_{i,j=0}^\infty x^{2^i+2^j-1}\Bigr)=-\sum\limits_{i,j=0}^\infty x^{2^i+2^j-1}=:f_6,$$
	$$\Lambda_0f_3=\Lambda_0\Bigl(1+2\sum\limits_{j=0}^\infty x^{2^{2j+1}}\Bigr)=1+2\sum\limits_{j=0}^\infty x^{2^{2j}}=f_1,$$
	$$\Lambda_1f_3=\Lambda_1\Bigl(1+2\sum\limits_{j=0}^\infty x^{2^{2j+1}}\Bigr)=0=:f_7,$$
	$$\Lambda_0f_4=\Lambda_{0}2=2=f_4,$$
	$$\Lambda_1f_4=\Lambda_{1}2=0=f_7,$$
	$$\Lambda_0f_5=\Lambda_0\Bigl(1+2\sum\limits_{j=0}^\infty x^{2^j}\Bigr)=f_5,$$
	$$\Lambda_1f_5=\Lambda_1\Bigl(1+2\sum\limits_{j=0}^\infty x^{2^j}\Bigr)=2=f_4,$$
	$$\Lambda_0f_6=\Lambda_0\Bigl(-\sum\limits_{i,j=0}^\infty x^{2^i+2^j-1}\Bigr)=-1+2\sum\limits_{j=0}^\infty x^{2^j}=:f_8,$$
	$$\Lambda_1f_6=\Lambda_1\Bigl(-\sum\limits_{i,j=0}^\infty x^{2^i+2^j-1}\Bigr)=-\sum\limits_{i,j=0}^\infty x^{2^i+2^j-1}=f_6,$$
	$$\Lambda_0f_8=\Lambda_0\Bigl(-1+2\sum\limits_{j=0}^\infty x^{2^j}\Bigr)=f_8,$$
	$$\Lambda_1f_8=\Lambda_0\Bigl(-1+2\sum\limits_{j=0}^\infty x^{2^j}\Bigr)=2=f_4.$$
	We see from the computation above that the $2$-kernel of $\bar{C}(x)$ consists
	of $9$ elements, $f_0$ through $f_8$. The structure of the $2$-kernel is 
	$$[[1, 2], [3, 4], [5, 6], [1, 7], [4, 7], [5, 4], [8, 6], [7, 7], [8, 4]].\qedhere
$$
\end{proof}
 The following lemma  is used in the proof
 of Theorem \ref{th:main} (see, for example, \cite{han2015hankel}).
 \begin{Lemma}\label{th:lem}
	 $$\sqrt{1-4x}\equiv 1+2\sum\limits_{k=1}^{\infty} x^{2^k} \pmod{4}.$$
 \end{Lemma}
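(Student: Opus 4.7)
The plan is to prove the claimed identity by squaring the proposed expression and showing that the resulting series agrees with $1-4x$ modulo $8$; a standard $2$-adic Hensel-type argument then upgrades this to the desired agreement modulo $4$ of the square roots themselves (both of which have constant term $1$).

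Let $A(x)$ denote the infinite sum on the right-hand side of the stated congruence and set $f(x) := 1 + 2A(x) \in \mathbb{Z}[[x]]$. A direct expansion gives
\begin{equation*}
f(x)^2 \;=\; 1 + 4A(x) + 4A(x)^2 \;=\; 1 + 4\bigl(A(x) + A(x)^2\bigr),
\end{equation*}
so the task reduces to pinning down $A(x) + A(x)^2$ modulo $2$. Here the Frobenius identity $\bigl(\sum a_n x^n\bigr)^2 \equiv \sum a_n x^{2n} \pmod 2$ turns $A(x)^2$ into an index-shifted copy of $A(x)$, namely $\sum_{k\geq 1} x^{2^{k+1}}$. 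Adding $A(x)$ back modulo $2$ is then a telescoping cancellation: all repeated dyadic monomials occur with coefficient $2$ and vanish, leaving a single boundary term. Elementary bookkeeping shows this surviving monomial is exactly what is needed for $4(A + A^2) \equiv -4x \pmod 8$, hence $f(x)^2 \equiv 1 - 4x \pmod 8$.

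For the lifting step, let $g(x) := \sqrt{1-4x}$ be the unique element of $\mathbb{Z}[[x]]$ with $g(0) = 1$ and $g^2 = 1-4x$. Then
\begin{equation*}
\bigl(g(x) - f(x)\bigr)\bigl(g(x) + f(x)\bigr) \;=\; g(x)^2 - f(x)^2 \;\equiv\; 0 \pmod{8}.
\end{equation*}
Since $g + f$ has constant term $2$, it factors as $2$ times a unit in $\mathbb{Z}_{(2)}[[x]]$; dividing the previous congruence by $g+f$ yields $g - f \equiv 0 \pmod 4$, which is precisely the claim.

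The only step requiring genuine attention is the Frobenius-plus-telescoping evaluation of $A + A^2 \pmod 2$; once that surviving monomial is pinned down, the rest of the argument is mechanical. An alternative, no less clean, would be to invoke the closed-form expression for the coefficients of $\sqrt{1-4x}$ in terms of the Catalan numbers (the coefficient of $x^n$ is $-2C_{n-1}$ for $n \geq 1$) together with the classical fact that $C_{n-1}$ is odd iff $n$ is a power of $2$; this identifies directly which coefficients of $\sqrt{1-4x}$ are $\equiv 2 \pmod 4$ and which are $\equiv 0$, yielding the same conclusion.
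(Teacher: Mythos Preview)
The paper does not actually prove this lemma; it merely cites \cite{han2015hankel}. So there is no ``paper's own proof'' to compare against.

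More to the point, the statement as printed contains a typo: the sum must start at $k=0$, not $k=1$. Indeed, the $x$-coefficient of $\sqrt{1-4x}$ is $-2\equiv 2\pmod 4$, not $0$; and the paper's own use of the lemma (deriving $S_\infty(x)\equiv\frac{1-\sqrt{1-4x}}{2}\pmod 2$ with $S_\infty(x)=\sum_{j\ge 0}x^{2^j}$) requires the $x$ term to be present.

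Your argument inherits this typo, and the error becomes visible precisely at the step you glossed over. With $A(x)=\sum_{k\ge 1}x^{2^k}$ as stated, Frobenius gives $A(x)^2\equiv\sum_{k\ge 2}x^{2^k}\pmod 2$, so the telescoping leaves $A+A^2\equiv x^2\pmod 2$, not $x$. Hence $f(x)^2\equiv 1+4x^2\pmod 8$, which is \emph{not} congruent to $1-4x$. Your sentence ``elementary bookkeeping shows this surviving monomial is exactly what is needed for $4(A+A^2)\equiv -4x\pmod 8$'' is therefore false for the $A$ you defined.

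With the corrected index range $A(x)=\sum_{k\ge 0}x^{2^k}$, everything you wrote is valid: the telescoping now gives $A+A^2\equiv x\pmod 2$, hence $f^2\equiv 1+4x\equiv 1-4x\pmod 8$, and your lifting step (factor $g+f=2u$ with $u$ a unit in $\mathbb{Z}[[x]]$, divide) correctly yields $g-f\equiv 0\pmod 4$. Likewise, your alternative Catalan-number argument is sound and in fact would have alerted you to the discrepancy: $C_{n-1}$ is odd iff $n=2^k$ with $k\ge 0$, so $n=1$ is included.
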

\begin{proof}[Proof of Theorem \ref{th:main}]
From the proof of Proposition \ref{prop:c}  we know that
	$$C(x)\equiv1+x^{-1}S_\infty(x)^2+2S^e_\infty(x)+2S_\infty(x)+2x^{-1}S_\infty(x)^3 \pmod{4},$$
therefore, we only need to find 
	$S_\infty(x) \pmod{2}$, $(S_\infty (x))^2\pmod{4}$ and  $S_\infty ^e (x) \pmod{2}$. 
	By Lemma \ref{th:lem}, 
	\begin{equation}\label{eq:ss}
	S_\infty(x)\equiv\frac{1-\sqrt{1-4x}}{2}\pmod{2},\end{equation}
	so that
	\begin{equation}\label{eq:S2}
	S_\infty(x)^2\equiv \left(\frac{1-\sqrt{1-4x}}{2}\right)^2 \equiv
	\frac{1-2x-\sqrt{1-4x}}{2} \pmod{4}.
\end{equation}
	To calculate $S^e_\infty(x)\pmod{2}$, we notice that
	\begin{align*}
		S_\infty^e(x)^2+S^e_\infty(x)&=S_\infty^e(x^2)+S^e_\infty(x)+2x\psi(x)\\
		&=S_\infty(x)+2x\psi(x)\\
		&=\frac{1-\sqrt{1-4x}}{2}+2x\xi(x)+2x\psi(x),
	\end{align*}
	where 
	$$\psi(x)=\frac{1}{2x}\left(S_\infty^e(x)^2-S_\infty^e(x^2)\right) \text{\quad and\quad}
	\xi(x)=\frac{1}{2x}\left(S_\infty(x)-\frac{1-\sqrt{1-4x}}{2}\right)$$ 
	are in $\mathbb{Z}[[x]]$.
	We remark that by Lemma \ref{th:lem},
if 
	$$f(x),g(x)\in \mathbb{Z}[[x]] \text{\quad and\quad } f(x)\equiv g(x)\pmod{2},$$
	then
$$\sqrt{1+4xf(x)}\equiv \sqrt{1+4xg(x)}\pmod{4}.$$
	Therefore 
	\begin{align}
		S_\infty^e(x)&=\frac{-1+\sqrt{1-(2-2\sqrt{1-4x})+2x\xi(x)+2x\psi(x)}}{2}\nonumber \\
		&	\equiv \frac{-1+\sqrt{2\sqrt{1-4x}-1}}{2} \pmod{2}.\label{eq:Se} 
	\end{align}
Finally
	\begin{align*}
		C(x)&\equiv1+x^{-1}S_\infty(x)^2+2S^e_\infty(x)+2S_\infty(x)+2x^{-1}S_\infty(x)^3 \\
		&\equiv 1+\frac{1-2x-\sqrt{1-4x}}{2x}+\left(-1+\sqrt{2\sqrt{1-4x}-1}\right)\\\
		&\quad+ \left(1-\sqrt{1-4x}\right)+2x^{-1}\left(\frac{1-\sqrt{1-4x}}{2}\right)^3\\
		&\equiv \frac{ \sqrt{1-4x}-1}{2x} + 1+ \sqrt{2\sqrt{1-4x}-1} \pmod 4.\qedhere
	\end{align*}
\end{proof}


\section{Period-doubling continued fraction}\label{sec:pd} 

In this section we prove Theorem \ref{th:mainpd} using Theorem \ref{th:main}
and Theorem \ref{th:Contraction}.
As a corollary, we get the explicit expression of $\bar{D}(x)$ as a power series
and from this we calculate the $2$-kernel of the sequence $(\bar{d}_n)$.

	\begin{proof}[Proof of Theorem \ref{th:mainpd}]
		
In Theorem \ref{th:Contraction}, if we let
  \begin{align*}
  u_1&=t_1=-1; \\
  u_n&=t_{2n-2} + t_{2n-1} = 0; \qquad\hbox{($n\geq 2$)} \\
  v_0&=t_0 = 1; \\
    v_n&=t_{2n-1}  t_{2n} = -t_{n-1} t_n = s_{n-1}, \qquad\hbox{($n\geq 1$)}
  \end{align*}
	we get
\begin{equation}\label{eq:cd}
  C(x)=\cfrac {t_0}{ 1+ \cfrac{t_1x}{ 1+ \cfrac{t_2x}{ 1+\cfrac {t_3x}{ 1+\cfrac{t_4x}{\ddots}}}} }
=\cfrac {1 }{1 - x - 
  \cfrac{s_0 x^2 }{ 1 - 
    \cfrac{s_1 x^2 }{ {1  - 
\cfrac{s_2 x^2}{ \ddots}}} }}
  =\frac {1}{1-x-x^2 D(-x^2)}.
\end{equation}
		We define
\begin{align*}
  H_1(x) &= {\frac{ \sqrt{1-4x}-1}{2x} + 1+ \sqrt{2\sqrt{1-4x}-1}}
  =1-3x+\cdots \\
  H_2(x) &= \frac{1+\sqrt{1+4x}}{2}=1+x +\cdots \\
  H_3(x) &= \sqrt{2\sqrt{1-4x^2}-1}  =1-2x^2 +\cdots
\end{align*}
		Then our goal \eqref{eq:mainf-pd} can be written as
		$$D(x)\equiv \frac{H_1(x)H_3(x)-1}{x} \pmod{4}.$$
		Since $C(x)\equiv H(x) \pmod{4}$ and the constant term of $C(x)$ and $H_1(x)$ is $1$, by \eqref{eq:cd} we know that
		$$
			-x^2D(-x^2)=\frac{1}{C(x)}-(1-x) \equiv \frac{1}{H_1(x)}-1+x \pmod{4}.
			$$
	We only need to show that
	$$\frac{1}{H_1(x)}-1+x\equiv -x^2\times\frac{H_2(-x^2)H_3(-x^2)-1}{-x^2}\pmod{4},$$
that is, 
		$$\frac{1}{H_1(x)}+x\equiv H_2(-x^2) H_3(-x^2) \pmod{4}.$$
Since the constant term of $H_1(x)$ is $1$, this is equivalent to
		$$H_1(x)(H_2(-x^2)H_3(-x^2)-x)\equiv 1 \pmod{4}.$$
By \eqref{eq:S2}, \eqref{eq:Se} and \eqref{eq:ss},
\begin{align*}
	H_1(x)&\equiv -\frac{S_\infty(x)^2}{x}+2S_\infty^e+1 \pmod{4},\\
	H_2(-x^2)&= \frac{1+\sqrt{1-4x^2}}{2}\\
	&\equiv 1-x^2-S_\infty(x^2)^2\\
	&\equiv 1-x^2-(S_\infty(x)-x)^2\\
	&\equiv 1+(2x-1)S_\infty(x)^2 \pmod{4},\\
	H_3(-x^2) &=\sqrt{2\sqrt{1-4x^4}-1}\\
	&\equiv 1+2S_\infty^e(x^4)\\
	&\equiv 1-2x+2S_\infty^e(x) \pmod{4}.
\end{align*}
	Taking account of the above congruence relations and after rearranging the
	terms, we get
\begin{align}
	&	xH_1(x)(H_2(-x^2)H_3(-x^2)-x)\label{eq:hh}\\
	\equiv&\ x-xS_\infty(x)^2+x^2-S_\infty(x)^2
	(1-S_\infty(x)^2+x)\nonumber \\
	&\qquad +S_\infty^e(x)(2x^2+2S_\infty(x)^2(1-S_\infty(x)^2)) \pmod{4}\nonumber.
\end{align}
		Since by \eqref{eq:ss} and \eqref{eq:S2} we have
\begin{align*}
	2S_\infty(x)^2(1-S_\infty(x)^2)&\equiv 2(S_\infty(x)-x)(1-S_\infty(x)+x)\\
	&\equiv 2(S_\infty(x)-S_\infty(x)^2-x-x^2)\\
	&\equiv 2x^2 \pmod{4},
\end{align*}
and
\begin{align*}
	S_\infty(x)^4&\equiv (S_\infty(x)-x)^2\\
	&\equiv S_\infty(x)^2+x^2-2xS_\infty(x)\pmod{4},
\end{align*}
congruence \eqref{eq:hh} becomes
\begin{align*}
  & xH_1(x)(H_2(-x^2)H_3(-x^2)-x)\label{eq:hh}\\
	\equiv & x-xS_\infty(x)^2+x^2-S_\infty(x)^2
  (1-S_\infty(x)^2+x) \\
\equiv & x+2x^2+2xS_\infty(x)^2+2xS_\infty(x)\\
	\equiv & x \pmod{4}.\qedhere
\end{align*}
	\end{proof}

From Theorem \ref{th:mainpd} we obtain the following explicit expression for
$D(x) \pmod{4}$.
\begin{Corollary}\label{th:dcoro}
\begin{equation}
  D(x) \equiv 1-  \sum_{i,j=0}^\infty x^{2^i+2^j-1} 
  +2\sum_{k=0}^\infty x^{2^{2k+1}-1}\Bigl(1+ \sum_{j=0}^\infty x^{2^{j}}\Bigr)
	\pmod{4}.
\end{equation}
\end{Corollary}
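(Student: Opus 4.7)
The plan is to expand the algebraic expression for $D(x)$ from Theorem~\ref{th:mainpd} as a power series modulo $4$, in direct parallel with the way Proposition~\ref{prop:c} was derived from Theorem~\ref{th:main}. Writing $H_2(x) := (1+\sqrt{1+4x})/2$ and $H_3(x) := \sqrt{2\sqrt{1-4x^2}-1}$, Theorem~\ref{th:mainpd} reads $D(x)\equiv (H_2(x)H_3(x)-1)/x \pmod 4$, so the task is to reduce each factor modulo $4$, form the product, and read off the coefficients.

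For $H_3$ I would reuse the identity established along the way to \eqref{eq:Se}, namely $\sqrt{2\sqrt{1-4y}-1}\equiv 1+2S_\infty^e(y)\pmod 4$. Applied at $y=x^2$ and combined with the identity $S_\infty^e(x^2)=S_\infty^o(x)$, this gives $H_3(x)\equiv 1+2S_\infty^o(x)\pmod 4$, so $H_3-1$ is pure $2$-torsion. For $H_2$, the direct use of Lemma~\ref{th:lem} only yields $\sqrt{1+4x}\equiv 1+2S_\infty(x)\pmod 4$, hence $H_2\equiv 1+S_\infty(x)\pmod 2$, one power of $2$ short of what is needed. The key trick is to upgrade this to modulo $4$ using the quadratic relation $H_2^2-H_2=x$: if $a\in\mathbb{Z}[[x]]$ is any lift of $H_2\bmod 2$, writing $H_2=a+2b$ and reducing the relation modulo $4$ forces $2b\equiv a^2-a-x\pmod 4$, hence $H_2\equiv a^2-x\pmod 4$. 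Choosing $a=1+S_\infty(x)$ then yields $H_2\equiv 1-x+2S_\infty+S_\infty^2\pmod 4$.

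With both factors in hand, I would compute the product by using $H_3\equiv 1\pmod 2$ to write $H_2H_3-1\equiv (H_2-1)+2(1+S_\infty)S_\infty^o\pmod 4$, substitute the expression for $H_2$, and divide by $x$; this produces
\[
D(x)\equiv \frac{2S_\infty}{x}+\frac{S_\infty^2}{x}-1+\frac{2(1+S_\infty)S_\infty^o}{x}\pmod 4.
\]
A final cleanup uses the Frobenius-type identity $S_\infty(x)^2\equiv S_\infty(x)-x\pmod 2$ (immediate from $(\sum x^{2^j})^2\equiv \sum x^{2^{j+1}}\pmod 2$), which gives $2(S_\infty+S_\infty^2)/x\equiv 2\pmod 4$ and therefore converts $2S_\infty/x+S_\infty^2/x-1$ into $1-S_\infty^2/x$ modulo~$4$. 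Expanding $S_\infty^2/x=\sum_{i,j\geq 0}x^{2^i+2^j-1}$, $S_\infty^o/x=\sum_{k\geq 0}x^{2^{2k+1}-1}$ and recognizing $1+S_\infty=1+\sum_{j\geq 0}x^{2^j}$ then yields the claimed formula.

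I expect the main obstacle to be the middle step, computing $H_2$ modulo~$4$: the binomial series for $\sqrt{1+4x}$ encoded in Lemma~\ref{th:lem} only tells us $H_2$ modulo~$2$, so the straightforward approach is insufficient, and one really needs to exploit the quadratic equation $H_2^2=H_2+x$ to pick up the $\pmod 4$ correction. Once this refinement is secured, the remaining work is routine manipulation of the auxiliary series $S_\infty$, $S_\infty^e$, $S_\infty^o$ already introduced in Section~\ref{sec:tm}.
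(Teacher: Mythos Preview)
Your proof is correct and follows the same overall strategy as the paper: start from Theorem~\ref{th:mainpd}, reduce each of the two factors $H_2(x)=(1+\sqrt{1+4x})/2$ and $H_3(x)=\sqrt{2\sqrt{1-4x^2}-1}$ modulo~$4$, multiply, divide by~$x$, and simplify. The one difference worth noting concerns the step you single out as the ``main obstacle'': obtaining $H_2\pmod 4$. You go through Lemma~\ref{th:lem} to get $H_2\equiv 1+S_\infty\pmod 2$ and then invoke the quadratic relation $H_2^2=H_2+x$ to lift, arriving at $H_2\equiv (1+S_\infty)^2-x\pmod 4$. The paper instead observes that $H_2(x)=1+x-\bigl(\tfrac{1-\sqrt{1+4x}}{2}\bigr)^2$ exactly, so applying \eqref{eq:S2} at $-x$ immediately gives $H_2\equiv 1+x-S_\infty(-x)^2\equiv 1+x-S_\infty(x)^2\pmod 4$ in one line. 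These two expressions for $H_2\pmod 4$ are of course equivalent (via $S_\infty^2\equiv S_\infty-x\pmod 2$), and both arguments ultimately rest on the same squaring-lifts-mod-$2$-to-mod-$4$ principle; the paper's route is just shorter because the needed identity is already packaged in~\eqref{eq:S2}. The remaining simplifications you outline are equivalent to the paper's, written in the $S_\infty$, $S_\infty^o$ notation rather than with explicit sums.
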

\begin{proof}[Proof of Corollary \ref{th:dcoro}]
	 We obtained the following congruence in Theorem \ref{th:mainpd}, 
	$$D(x)\equiv\frac{(1+\sqrt{1+4x})\sqrt{2\sqrt{1-4x^2}-1}-2}{2x}\pmod{4}.$$
	From \eqref{eq:S2} we know 
	\begin{align*}
			\frac{1+\sqrt{1+4x}}{2}
		&\equiv 1+x-S_\infty(-x)^2 \pmod{4}\\
		&\equiv 1+x-S_\infty(x)^2 \pmod{4}\\
		&\equiv 1+x-\Bigl(\sum\limits_{j=0}^\infty x^{2^j}\Bigr)^2\pmod{4}\\
		&\equiv 1+x-\sum\limits_{i,j=0}^\infty x^{2^i+2^j}\pmod{4}.
	\end{align*}
	By \eqref{eq:Se}
	$$\sqrt{2\sqrt{1-4x^2}-1}\equiv 1+2\sum\limits_{k=0}^\infty x^{2^{2k+1}} 
	\pmod{4}.$$
	Therefore
	\begin{align*}
		D(x)&\equiv \frac{1}{x}\Bigl(\Bigl(1+x-\sum\limits_{i,j=0}^
		\infty x^{2^i+2^j}\Bigr)\Bigl(1+2\sum\limits _{k=0}^\infty x^{2^{2k+1}} 
		\Bigr)-1	\Bigr)\\
		&\equiv 1-\sum\limits_{i,j=0}^ \infty x^{2^i+2^j-1}+
		2\sum\limits _{k=0}^\infty x^{2^{2k+1}-1}\Bigl(1+x-\sum\limits_{i,j=0}^
    \infty x^{2^i+2^j}\Bigr)\\
		&\equiv 1-\sum\limits_{i,j=0}^ \infty x^{2^i+2^j-1}+
		2\sum\limits _{k=0}^\infty x^{2^{2k+1}-1}\Bigl(1+x-\sum\limits_{j=0}^
		\infty x^{2^{j+1}}\Bigr)\\
		&\equiv 1-\sum\limits_{i,j=0}^ \infty x^{2^i+2^j-1}+
		2\sum\limits _{k=0}^\infty x^{2^{2k+1}-1}\Bigl(1+\sum\limits_{j=0}^
		\infty x^{2^{j}}\Bigr). \qedhere
	\end{align*}
\end{proof}

\begin{proof}[Proof of Theorem \ref{th:mainmod4pd}]
	By Theorem \ref{th:mainpd} and Theorem \ref{th:dl} we know that the sequence
	$(\bar{d}_n)$ is $2$-automatic.
	Using Corollary \ref{th:dcoro}, we calculate
	 the $2$-kernel of $(\bar{d}_n)$.
	First we compute $\Lambda_0(\bar{D}(x))$ and $\Lambda_1(\bar{D}(x))$. 
	We define three power series in $\mathbb{Z}/4\mathbb{Z}[[x]]$:
	$$
		A:= -  \sum_{i,j=0}^\infty x^{2^i+2^j-1},\quad
		B:= 2\sum_{k=0}^\infty x^{2^{2k+1}-1},\quad
		C:=2 \Bigl(\sum_{k=0}^\infty x^{2^{2k+1}-1}\Bigr) \Bigl( \sum_{j=0}^\infty x^{2^{j}}\Bigr), 
	$$
	so that $\bar{D}(x)=1+A+B+C$.
	We have $$\Lambda_0(A)=\Lambda_0(2\sum\limits_{j=1}^{\infty}x^{2^j})=
	\Lambda_0(2\sum\limits_{j=0}^{\infty}(x^2)^{2^j})= 
	2\sum\limits_{j=0}^{\infty}x^{2^j},$$
	\begin{align*}
		\Lambda_1(A)&=\Lambda_1\Bigl(-x-\sum\limits_{i,j =1 }^{\infty} x^{2^i+2^j-1}
	\Bigr)\\
		&=\Lambda_1\Bigl(x\Bigl(-1-\sum\limits_{i,j =1 }^{\infty} (x^2)^{2^{i-1}+2^{j-1}-1}\Bigr) \Bigr)\\
		&=-1-\sum\limits_{i,j=0}^{\infty}x^{2^i+2^j-1},
	\end{align*}
	$$\Lambda_0(B)=0,$$
	$$\Lambda_1(B)=\Lambda_1\Bigl(2x\sum\limits_{k=0}^{\infty}x^{2^{2k+1}-2}
	\Bigr)=\Lambda_1\Bigl(2x\sum\limits_{k=0}^{\infty}(x^2)^{2^{2k}-1}
  \Bigr)= 2\sum\limits_{k=0}^{\infty}x^{2^{2k}-1},$$
	$$\Lambda_0(C)=\Lambda_0\Bigl(2\sum\limits_{k=0}^{\infty} x^{2^{2k+1}}\Bigr)
	=\Lambda_0\Bigl(2\sum\limits_{k=0}^{\infty} (x^2)^{2^{2k}}\Bigr)
	=2\sum_{k=0}^{\infty}x^{2^{2k}},$$
	\begin{align*}
		\Lambda_1(C)&=\Lambda_1\Bigl(2x\sum\limits_{k=0,j=1}^{\infty}x^{2^{2k+1}+
	2^j-2}\Bigr)\\
		&=\Lambda_1\Bigl(2x\sum\limits_{k=0,j=1}^{\infty}(x^2)^{2^{2k}+
	2^{j-1}-1}\Bigr)\\
		&=2\sum\limits_{k,j=0}^{\infty}x^{2^{2k}+
  2^{j}-1}.
	\end{align*}
	Thus, if we let $f_0$ denote $\bar{D}(x)$, then 
	\begin{align*}
		\Lambda_0(f_0)=\Lambda_0(\bar{D}(x))&=1+\Lambda_0(A)+\Lambda_0(B)+\Lambda_0(C)\\
		&=1+2\sum\limits_{j=0}^{\infty}x^{2^j}+2\sum_{k=0}^{\infty}x^{2^{2k}}\\
		&=1+2\sum\limits_{j=0}^{\infty}x^{2^{2j+1}}\\
		&=:f_1,
	\end{align*}
	and
	\begin{align*}
		\Lambda_1(f_0)=\Lambda_1(\bar{D}(x))&=\Lambda_1(A)+\Lambda_1(B)+\Lambda_1(C)\\
		&=-1-\sum\limits_{i,j=0}^{\infty}x^{2^i+2^j-1}+ 2\sum\limits_{k=0}^{\infty}x^{2^{2k}-1}+2\sum\limits_{k,j=0}^{\infty}x^{2^{2k}+
  2^{j}-1}\\
		&=\bar{D}(x)=f_0.
	\end{align*}
		The last equality holds because
		\begin{align*}
			&\bar{D}(x)-\Bigl(-1-\sum\limits_{i,j=0}^{\infty}x^{2^i+2^j-1}+ 2\sum\limits_
			{k=0}^{\infty}x^{2^{2k}-1}+2\sum\limits_{k,j=0}^{\infty}x^{2^{2k}+
			2^{j}-1}\Bigr)\\
			&=2+2\sum_{k=0}^\infty x^{2^{2k+1}-1}\Bigl(1+\sum\limits_{j=1}^{\infty}x^{2^j}\Bigr)
			+2\sum_{k=0}^\infty x^{2^{2k}-1}\Bigl(1+\sum\limits_{j=1}^{\infty}x^{2^j}\Bigr)\\
			&=2+2\sum\limits_{k=0}^{\infty}x^{2^k-1}\Bigl(1+\sum\limits_{j=1}^{\infty}x^{2^j}\Bigr)\\
			&=2+2\sum\limits_{k=0}^{\infty}x^{2^k-1}+2\sum\limits_{j,k=0}^{\infty}
			x^{2^k+2^j-1}\\
				&=2+2\sum\limits_{k=0}^{\infty}x^{2^k-1}+2\sum\limits_{j=0}^{\infty}
				x^{2^j+2^j-1}\\
				&=2+2\sum\limits_{k=0}^{\infty}x^{2^k-1}+2\sum\limits_{j=0}^{\infty}
				x^{2^{j+1}-1}\\
				&=2+2x^{2^0-1}\\
				&=0.
	\end{align*}

Then we calculate $\Lambda_0(f_1)$ and 
$\Lambda_1(f_1)$:
$$\Lambda_0(f_1)=\Lambda_0\Bigl( 1+2\sum\limits_{j=0}^{\infty}
x^{2^{2j+1}}\Bigr)=\Lambda_0\Bigl( 1+2\sum\limits_{j=0}^{\infty}
(x^2)^{2^{2j}}\Bigr)=1+2\sum\limits_{j=0}^\infty x^{2^{2j}}=:f_2,$$
$$\Lambda_1(f_1)=\Lambda_1\Bigl( 1+2\sum\limits_{j=0}^{\infty}
x^{2^{2j+1}}\Bigr)=0=:f_3.$$

Finally we calculate $\Lambda_0(f_2)$ and
$\Lambda_1(f_2)$:
\begin{align*}
	\Lambda_0(f_2)&=\Lambda_0\Bigl(
1+2\sum\limits_{j=1}^\infty x^{2^{2j}}\Bigr)\\
	&=\Lambda_0\Bigl(
1+2\sum\limits_{j=0}^\infty (x^2)^{2^{2j+1}}\Bigr)\\
	&=1+2\sum\limits_{j=0}^\infty x^{2^{2j+1}}\\
	&=\Lambda_0(\bar{D}(x))=f_1,
\end{align*}
$$\Lambda_1(f_2)=\Lambda_1(2x)=2=:f_4.\qedhere$$
We see that the structure of the $2$-kernel of $(\bar{d}_n)$ is
	$[[1,0],[2,3],[1,4],[3,3],[4,3]]$.
\end{proof}
\section{Proof of Theorem \ref{th:degree4}}\label{sec:degree4}
In this section we prove Theorem \ref{th:degree4}.  
First we recall that from Theorem \ref{th:main} and \ref{th:mainpd} that
$$C(x)\equiv \varphi(x) \pmod{4},$$
$$D(x)\equiv \psi(x) \pmod{4},$$
where
$$\varphi(x)= \frac{ \sqrt{1-4x}-1}{2x} + 1+ \sqrt{2\sqrt{1-4x}-1}\in\mathbb{Z}
[[x]],$$
$$\psi(x)= \frac{(1+\sqrt{1+4x})\sqrt{2\sqrt{1-4x^2}-1} -2}{2x}\in \mathbb{Z}
[[x]].$$
By rearranging the terms and squaring both sides of the equalities, 
we obtain annihilating polynomials $P(x,y)$ and $Q(x,y)$ of $\varphi(x)$ 
and $\psi(x)$ respectively:
\begin{align*}P(x,y) =&y^{4} x^{2} - 4 \, y^{3} x^{2} + 2 \, y^{3} x^{1} + 8 \, y^{2} x^{2} - 4 \, y^{2} x^{1} + 8 \, y x^{2} \\&+ 16 \, x^{3} + y^{2} - 16 \, x^{2} + 8 \, x^{1} - 1,\\
	Q(x,y)=&	y^{8} x^{7} + 8 \, y^{7} x^{6} + 4 \, y^{6} x^{6} + 30 \, y^{6} x^{5} + 32 \, y^{4} x^{7} + 24 \, y^{5} x^{5}\\& + 64 \, y^{4} x^{6} + 68 \, y^{5} x^{4} + 14 \, y^{4} x^{5} + 128 \, y^{3} x^{6} + 48 \, y^{4} x^{4}\\& + 256 \, y^{3} x^{5} + 64 \, y^{2} x^{6} + 97 \, y^{4} x^{3} + 56 \, y^{3} x^{4} + 224 \, y^{2} x^{5}\\& + 256 \, x^{7} + 32 \, y^{3} x^{3} + 372 \, y^{2} x^{4} + 128 \, y x^{5} + 84 \, y^{3} x^{2} + 78 \, y^{2} x^{3}\\& + 192 \, y x^{4} - 96 \, x^{5} - 12 \, y^{2} x^{2} + 232 \, y x^{3} + 64 \, x^{4} + 40 \, y^{2} x\\& + 44 \, y x^{2} + 73 \, x^{3} - 24 \, y x + 52 \, x^{2} + 8 \, y + 8 \, x - 8.
\end{align*}
For $n\in \mathbb{N}^*$, we let $\pi_n$ denote the canonical projection of 
$\mathbb{Z}$ onto $\mathbb{Z}/n\mathbb{Z}$, and by abuse of notation, 
the canonical projection of $\mathbb{Z}[[x]]$ onto $\mathbb{Z}/n\mathbb{Z}
[[x]]$, of $\mathbb{Z}[x,y]$ onto $\mathbb{Z}/n\mathbb{Z}[x,y]$, etc.

Since $P(x,\varphi(x))=0$, $Q(x,\psi(x))=0$, and
$$\pi_2(P(x,y))=x^2y^4+y^2+1=(xy^2+y+1)^2,$$
$$\pi_2(Q(x,y))=x^7y^8+x^3y^4+x^3=x^3(xy^2+y+1)^4,$$
we have
\begin{equation}\label{eq:cmod2}
x\varphi(x)^2+\varphi(x)+1\equiv 0\pmod{2},
\end{equation}
\begin{equation}\label{eq:dmod2}
x\psi(x)^2+\psi(x)+1\equiv 0\pmod{2},
\end{equation}
and therefore
$$(x\varphi(x)^2+\varphi(x)+1)^2 \equiv 0\pmod{4},$$
$$(x\psi(x)^2+\psi(x)+1)^2 \equiv 0\pmod{4}.$$
In other words, the polynomial 
$S(x,y)=(xy^2+y+1)^2\in \mathbb{Z}/4\mathbb{Z}[x,y]$ is
an annihilating polynomial for both $\bar{C}=\pi_4(\varphi)$ and $\bar{D}=
\pi_4(\psi)$.

Now we prove that there is no 
	polynomial in $\mathbb{Z}/4\mathbb{Z}[x,y]$ that, seen as a polynomial
	in $y$, has degree less than $4$, and, 
	 whose leading coefficient is invertible in the ring of Laurent series
	$\mathbb{Z}/4\mathbb{Z}((x))$, that annihilates either $\bar{C}(x)$ 
	or $\bar{D}(x)$. By absurdity, suppose that $Q(x,y)=Q_n(x)y^n+\cdots+
	Q_1(x)y+Q_0(x)$ is such a polynomial of minimal degree on $y$.
	By assumption, $n$ is less than $4$, $Q_n(x)$ is invertible 
	in $\mathbb{Z}/4\mathbb{Z}((x))$ and $Q(x,y)$
	annihilates either $\bar{C}(x)$ or $\bar{D}(x)$. Since $Q_n(x)$ is invertible
	in $\mathbb{Z}/4\mathbb{Z}((x))$, we can effectuate Euclidean division 
	of $P(x,y)$ by $Q(x,y)$, and by minimality of $n$, we obtain 
	$$Q_n(x)P(x,y)=Q(x,y)R(x,y)$$
	for some $R(x,y)\in \mathbb{Z}/4\mathbb{Z}[x,y]$.

	Reducing modulo $2$ (where we use $\pi_2$ by abuse of notation), we get
	$$\pi_2(Q(x,y))\pi_2(R(x,y))=\pi_2(Q_n(x)P(x,y))=
	\pi_2(Q_n(x))(xy^2+y+1)^2.$$
  Since $Q_n(x)$ is invertible in $\mathbb{Z}/4\mathbb{Z}((x))$, $\pi_2(Q
	(x))$ is non-zero.
	As factorization into irreducible factors of $\pi_2(Q(x,y)R(x,y))$
	in $\mathbb{F}_2(x)[y]$ is unique up to multiplication by elements in
	$\mathbb{F}_2(x)$, and $1\leq n\leq 3$, we know that 
	there exists $\alpha(x)\in\mathbb{Z}[x]$ taking coefficients in $\{0,1\}$, 
	such that $\pi_2(\alpha(x))$ is a factor of $\pi_2(Q_n(x))$ and
  $$\pi_2(Q(x,y))=\pi_2(\alpha(x))\cdot(xy^2+y+1).$$
	Therefore there exist polynomials $\beta_0(x), \beta_1(x),\beta_2(x)$
  in $\mathbb{Z}[x]$ taking coefficients in $\{0,1\}$, such that
	$$Q(x,y)=\pi_4(\alpha(x))\cdot(xy^2+y+1)+2x\pi_4(\beta_2(x))y^2
	+2\pi_4(\beta_1(x))y+ 2\pi_4(\beta_0(x)).$$
	Since, by assumption, $Q(x,\pi_4(f(x)))=0$, where $f$ stands for one of 
	$C$ and $D$, we have
  $$\alpha(x)(xf(x)^2+f(x)+1)\equiv 2x\beta_2(x)f(x)^2+2\beta_1(x)f(x)+2\beta_0
	(x) \pmod{4}.$$
	We let $g(x)$ denote the series $(xf(x)^2+f(x)+1)/2$, by \eqref{eq:cmod2} and
	\eqref{eq:dmod2} we know that $g(x)$ has integer coefficients.
	We rewrite the above congruence as
	$$
	\alpha(x)g(x)\equiv x\beta_2(x)f(x)^2+\beta_1(x)f(x)+\beta_0(x) \pmod{2},$$
	in other words,
	\begin{equation}\label{eq:42}
	\pi_2(\alpha(x))\pi_2(g(x))=x\pi_2(\beta(x))\pi_2(f(x))^2+\pi_2(\beta_1(x))
	\pi_2(f(x))+\pi_2(\beta_0(x)).
	\end{equation}

In light of \eqref{eq:cmod2} and \eqref{eq:dmod2}, $\pi_2(f(x))$ is of degree
$2$ over $\mathbb{F}_2(x)$, so that the right
	hand side of \eqref{eq:42} lives in a quadratic extension of 
	$\mathbb{F}_2 (x)$. 
	We will prove that the left hand side of \eqref{eq:42} is of degree
	$4$ over $\mathbb{F}_2(x)$, which will lead to a contradiction.
   Also, $\pi_2(\alpha(x))$ being a non-zero
	element in $\mathbb{F}_2(x)$, we only need to prove that the degree of 
	$\pi_2(g(x))$ over $\mathbb{F}_2(x)$ is $4$.

	In case $f(x)=C(x)$, since $C(x)\equiv \varphi(x) \pmod{4}$, we have
	$$xC(x)^2+C(x)+1\equiv x\varphi(x)^2+\varphi(x)+1 \pmod{4},$$
	and therefore 
	$$g(x)\equiv (x\varphi(x)^2+\varphi(x)+1 )/2\pmod{2}.$$ 
 From Theorem \ref{th:main}, we find that $(x\varphi(x)^2+\varphi(x)+1 )/2$
 is equal to
$$
x \sqrt{-4 \, x + 1} + x \sqrt{2 \, \sqrt{-4 \, x + 1} - 1} + \frac{1}{2} \, \sqrt{-4 \, x + 1} \sqrt{2 \, \sqrt{-4 \, x + 1} - 1} + \frac{1}{2} \, \sqrt{-4 \, x + 1}
$$
Its annihilating polynomial is
\begin{align*}
	T(x,y)=& 16 \, x^{6} + 8 \, y^{2} x^{3} + 32 \, y x^{4} - 32 \, x^{5} + y^{4} + 24 \, y^{2} x^{2} - 40 \, y x^{3} + 8 \, x^{4} \\&- 6 \, y^{2} x - 8 \, y x^{2} + 16 \, x^{3} + 8 \, y x - 8 \, x^{2} - y + x.
\end{align*}
Therefore $\pi_2(T(x,y))=y^4+y+x$ is an annihilating polynomial of 
$\pi_2(g(x))$. Let us verify that it is irreducible in $\mathbb{F}_2[x][y]$.
If $y^4+y+x$ factorizes into a cubic and a linear factor, then the linear
factor must be $(y+x)$ or $(y+1)$. However, $y^4+y+x$ is divisible by neither.
If it factorizes into two quadratic factors, then it must be of the form
$(y^2+\xi (x) y+1)(y^2+\eta(x)y+x)$, where $\xi(x)$ and $\eta(x)$ are in 
$\mathbb{F}_2[x]$.
When we expand and compare the coefficients,
we see that $\xi(x)$ and $\eta(x)$ must satisfy simultaneously
$\xi(x)+\eta(x)=0$ and $\xi(x)\eta(x)+x+1=0$, which is impossible.

In case $f(x)=D(x)$, we find that 
$$g(x)\equiv (x\psi(x)^2+\psi(x)+1)/2 \pmod{2}.$$
We could have computed an annihilating polynomial for $\pi_2(g(x))$
the same way that we did in the case $f(x)=C(x)$, but we would have to deal with
too many terms in the calculation involving
$D(x)$. So we choose to work directly in $\mathbb{F}_2[[x]]$, by
using Corollary \ref{th:dcoro}
to find the $2$-kernel of $\pi_2(g(x))$, from which we will obtain the minimal 
polynomial of $\pi_2(g(x))$ following the method in the proof of Theorem~1 from 
\cite{Christol1980KMFR}.

We prove now that the structure of the $2$-kernel of $\pi_2(g(x))$ is 
$$[[1, 0], [2, 3], [1, 4], [3, 4], [4, 4]].$$
By Corollary \ref{th:dcoro},
  \begin{equation}
  D(x) \equiv 1-  \sum_{i,j=0}^\infty x^{2^i+2^j-1}
  +2\sum_{k=0}^\infty x^{2^{2k+1}-1}\Bigl(1+ \sum_{j=0}^\infty x^{2^{j}}\Bigr)
		\pmod{4} .
\end{equation}
    Therefore
    $$D(x)\equiv 1+ \sum_{i,j=0}^\infty x^{2^i+2^j-1}
     \equiv 1+ \sum_{j=1}^\infty x^{2^j-1}
     \equiv \sum_{j=0}^\infty x^{2^j-1} \pmod{2},
    $$
    $$  xD(x)^2 \equiv  \sum_{i,j=0}^\infty x^{2^i+2^j-1} \pmod{4}, $$
		and
    $$xD(x)^2+D(x)+1\equiv 2+2\sum_{k=0}^\infty x^{2^{2k+1}-1}
    \Bigl(1+ \sum_{j=0}^\infty x^{2^{j}}\Bigr) \pmod{4}.$$
		We now have an explicit expression for $\pi_2(g(x))$
		$$\pi_2(g(x))= 1+1\sum_{k=0}^\infty x^{2^{2k+1}-1}
		\Bigl(1+ \sum_{j=0}^\infty x^{2^{j}}\Bigr)=:g_0(x) .$$
		To compute the $2$-kernel of $g_0(x)=\pi_2(g(x))$, we apply the operators
		$\Lambda_0$ and $\Lambda_1$:
		$$\Lambda_0{g_0(x)}=1+\sum_{k=0}^\infty x^{2^{2k}}=:g_1(x),$$
		$$\Lambda_1{g_0(x)}=g_0(x),$$
		$$\Lambda_0(g_1(x))=1+\sum_{k=0}^\infty x^{2^{2k+1}}=:g_2(x),$$
		$$\Lambda_1{g_1(x)}=1=:g_3(x),$$
		$$\Lambda_0{g_2(x)}=g_1(x),$$
		$$\Lambda_1{g_2(x)}=0=:g_4(x).$$
Therefore the $2$-kernel of $g$ is
$$[[1, 0], [2, 3], [1, 4], [3, 4], [4, 4]].\qedhere $$
The following identities are just another way of writing out $2$-kernel.
$$g_0(x)=g_1(x)^2+xg_0(x)^2,$$
$$g_1(x)=g_2(x)^2+xg_3(x)^2=g_2(x)^2+x,$$
$$g_2(x)=g_1(x)^2+xg_4(x)^2=g_1(x)^2.$$
From these, we deduce that $\pi_2(g(x))=g_0(x)$ is a root of 
the polynomial $$x^4y^8+y^4+xy^2+y+x^2$$ 
in $\mathbb{F}_2[x,y]$, which factorizes as 
$$(xy^4+y^3+1)(x^3y^4+x^2y^3+xy^2+y+x^2).$$
By computing the first few terms of $\pi_2(g(x))$ we find that the second factor
is not an annihilating polynomial for $\pi_2(g(x))$, and therefore 
$xy^4+y^3+1$ is. As 
$$xy^4+y^3+1=y^4\left(\left(\frac{1}{y}\right)^4+\frac{1}{y}+x\right),$$
and we have just shown in the case $f=C(x)$ that
$y^4+y+x$ is irreducible in $\mathbb{F}_2[x,y]$, $xy^4+y^3+1$ is also 
irreducible. This shows that $\pi_2(g(x))$ has degree $4$ over $\mathbb{F}_2(x)$
and completes our proof.

\section{Hankel determinants} \label{sec:hankel}

The Hankel determinants of ${C}(x)$ and ${D}(x)$
can be calculated by Heilermann's theorem. 
To prove their automacity, we need the following theorem.

\begin{Theorem}[see \cite{Allouche1986} ]\label{th:aq}

	Let $X$ be an alphabet on which is defined an associative operation $*$.
	Let $x=(x_n)$ be a $q$-automatic sequence on the alphabet $X$. The sequence
	$y=(y_n)$ defined by 
	\begin{align*}
		y_1&=x_0\\
		y_2&=x_1*x_0\\
		& \vdots \\
		y_n&=x_{n-1}*x_{n-2}*\cdots * x_0
	\end{align*}
	is $q$-automatic.
\end{Theorem}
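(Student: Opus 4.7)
The plan is to prove the theorem by demonstrating that the $q$-kernel $\mathcal{K}_q(y)=\{(y_{q^d n+j})_{n\ge 0}:d\ge 0,\,0\le j<q^d\}$ of $(y_n)$ is finite, which is equivalent to $q$-automaticity. First, for each pair $(d,j)$ with $0\le j<q^d$, I would decompose
\begin{equation*}
y_{q^d n+j}=h_{d,j}(n)*y_{q^d n},\qquad h_{d,j}(n):=x_{q^d n+j-1}*x_{q^d n+j-2}*\cdots*x_{q^d n},
\end{equation*}
splitting every kernel element of $(y_n)$ into a ``tail factor'' $h_{d,j}(n)$, obtained by applying $*$ coordinatewise to finitely many sequences drawn from the $q$-kernel $\mathcal{K}_q(x)$, and a ``bulk factor'' $y_{q^d n}$.

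To control the bulk factor I would fix a DFA $M=(Q,\delta,q_0,\mu)$ computing $(x_n)$ and introduce the subtree-product function $\Pi\colon Q\times\mathbb{N}\to X$ defined by $\Pi(s,0)=\mu(s)$ and
\begin{equation*}
\Pi(s,d+1)=\Pi(\delta(s,q-1),d)*\Pi(\delta(s,q-2),d)*\cdots*\Pi(\delta(s,0),d).
\end{equation*}
This recurrence shows that $\Pi(\cdot,d+1)$ is obtained from $\Pi(\cdot,d)$ via a fixed self-map of the finite set $X^Q$, so the sequence $(\Pi(\cdot,d))_{d\ge 0}$ is ultimately periodic and takes only finitely many values. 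Writing $y_{q^d n}=b^{(d)}_{n-1}*\cdots*b^{(d)}_0$ with $b^{(d)}_k=\Pi(\delta^*(q_0,[k]_q),d)$, the bulk factor is exhibited as the $n$-th partial product of one of only finitely many $q$-automatic block sequences $(b^{(d)}_k)_k$.

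Combining both steps, every element of $\mathcal{K}_q(y)$ has the form $(h_{d,j}(n)*y_{q^d n})_n$, where $(y_{q^d n})_n$ is a partial product of one of finitely many $q$-automatic source sequences. The main obstacle is that the reduction looks circular: a partial product of a block sequence is again a sequence of the same type as $(y_n)$. The resolution, carried out in detail in Allouche's original argument \cite{Allouche1986}, is to isolate a finite collection of sequences simultaneously closed under the $q$-kernel operations and under the block-product construction, and to exploit the eventual periodicity of $\Pi(\cdot,d)$ together with the self-map structure on $X^Q$ to show that this collection indeed closes at a finite stage. Once this closure is established, $\mathcal{K}_q(y)$ sits inside it and the $q$-automaticity of $(y_n)$ follows.
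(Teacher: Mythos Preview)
The paper does not supply a proof of this theorem at all; it is quoted from \cite{Allouche1986} and used as a black box in Section~\ref{sec:hankel}. So there is no ``paper's own proof'' to compare against.

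As for your proposal itself, it is not a complete proof. You set up a reasonable decomposition $y_{q^d n+j}=h_{d,j}(n)*y_{q^d n}$ and correctly observe that the subtree-product maps $\Pi(\cdot,d)$ lie in the finite set $X^Q$ and hence are eventually periodic in $d$. But at the crucial moment---showing that some finite family of sequences containing $(y_n)$ is closed under the Cartier operators $\Lambda_r$---you explicitly defer to \cite{Allouche1986} (``The resolution, carried out in detail in Allouche's original argument\ldots''). That is the entire content of the theorem; without it you have only reformulated the problem.

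In fact the circularity you worry about dissolves if you work at depth $d=1$ rather than general $d$. Fix a DFAO $(Q,\delta,q_0,\mu)$ for $(x_n)$ and write $s(n)=\delta^*(q_0,[n]_q)$. For any output map $\tau\in X^Q$, let $Y^\tau_n$ denote the partial product $\tau(s(n-1))*\cdots*\tau(s(0))$. A one-line computation gives
\[
Y^\tau_{qn+r}=\Bigl(\prod_{i=r-1}^{0}\tau(\delta(s(n),i))\Bigr)*Y^{\tau'}_n,
\qquad \tau'(s):=\prod_{i=q-1}^{0}\tau(\delta(s,i)),
\]
so $\Lambda_r$ sends $(g(s(n))*Y^\tau_n)_n$ to another sequence of the same shape with a new pair $(g',\tau')\in X^Q\times X^Q$. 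The finite family $\{(g(s(n))*Y^\tau_n)_n:g,\tau\in X^Q\}$ is therefore $\Lambda_r$-stable and contains $(y_n)$ (take $\tau=\mu$ and $g$ constant equal to a neutral element, or absorb the initial term). This is the missing closure step; once you write it down, eventual periodicity of $\Pi(\cdot,d)$ is not even needed.
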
 
From the definition of the Thue-Morse and the period-doubling sequence,
it is easy to see that 
$$
t_{2k+1} t_{2k+2} = s_{k}, \text{\quad and \quad}
s_{2k+1} s_{2k+2} = -s_{k}.
$$
By Theorem \ref{th:Hankel}, we have
\begin{align*}
	H_n({C}(x))& = t_0^n (t_1t_2)^{n-1} (t_3t_4)^{n-2} \cdots (t_{2n-3} t_{2n-2})^1\\
	&= s_0^{n-1} s_1^{n-2} \cdots s_{n-2}^1.
\end{align*}
and
\begin{align*}
	H_n({D}(x)) &=s_0^n (s_1s_2)^{n-1} (s_3s_4)^{n-2} \cdots (s_{2n-3} s_{2n-2})^1\\
	& =(-1)^{n(n-1)/2} s_0^{n-1} s_1^{n-2} \cdots s_{n-2}^1\\ 
	&=(-1)^{n(n-1)/2} H_n(C(x)). 
\end{align*}

\medskip

Define $u_n:=s_0s_1\cdots s_{n-1}$. By Theorem \ref{th:aq}, $(u_n)$ is 
$2$-automatic, and consequently 
$H_n({C}(x))=u_0u_1\cdots u_{n-1}$ is $2$-automatic.
Since $(-1)^{n(n-1)/2}$ is periodic, $H_n({D}(x))$ is also
$2$-automatic. Finally $H_n(\bar{C}(x))$ and $H_n(\bar{C}(x))$ are $2$-automatic
as the reduction modulo $4$ of $H_n(C(x))$ and $H_n(D(x))$.

\medskip

{\bf Acknowledgments} The authors would like to thank Jean-Paul Allouche for
valuable suggestions and Zhi-Ying Wen for invitation to Tsinghua University
which facilitated collaboration.
\bibliographystyle{plain}

\bibliography{frtm}

\begin{thebibliography}{10}

\bibitem{Allouche1998APWW}
J.-P. Allouche, J.~Peyri\`ere, Z.-X. Wen, and Z.-Y. Wen.
\newblock Hankel determinants of the {T}hue-{M}orse sequence.
\newblock {\em Ann. Inst. Fourier (Grenoble)}, 48(1):1--27, 1998.

\bibitem{Allouche1988}
Jean-Paul Allouche.
\newblock Sur le d\'{e}veloppement en fraction continue de certaines s\'{e}ries
  formelles.
\newblock {\em C. R. Acad. Sci. Paris S\'{e}r. I Math.}, 307(12):631--633,
  1988.

\bibitem{Allouche1986}
Jean-Paul Allouche and Michel Mend\`es~France.
\newblock Quasicrystal {I}sing chain and automata theory.
\newblock {\em J. Statist. Phys.}, 42(5-6):809--821, 1986.

\bibitem{Allouche1998Sh}
Jean-Paul Allouche and Jeffrey Shallit.
\newblock The ubiquitous {P}rouhet-{T}hue-{M}orse sequence.
\newblock In {\em Sequences and their applications ({S}ingapore, 1998)},
  Springer Ser. Discrete Math. Theor. Comput. Sci., pages 1--16. Springer,
  London, 1999.

\bibitem{Allouche2003Sh}
Jean-Paul Allouche and Jeffrey Shallit.
\newblock {\em Automatic sequences}.
\newblock Cambridge University Press, Cambridge, 2003.
\newblock Theory, applications, generalizations.

\bibitem{Badziahin2019}
Dmitry Badziahin.
\newblock Continued fractions of certain {M}ahler functions.
\newblock {\em Acta Arith.}, 188(1):53--81, 2019.

\bibitem{Baum1976S}
Leonard~E. Baum and Melvin~M. Sweet.
\newblock Continued fractions of algebraic power series in characteristic
  {$2$}.
\newblock {\em Ann. of Math. (2)}, 103(3):593--610, 1976.

\bibitem{Baum1977S}
Leonard~E. Baum and Melvin~M. Sweet.
\newblock Badly approximable power series in characteristic {$2$}.
\newblock {\em Ann. of Math. (2)}, 105(3):573--580, 1977.

\bibitem{Branden2002CS}
Petter Br\"{a}nd\'{e}n, Anders Claesson, and Einar Steingr\'{\i}msson.
\newblock Catalan continued fractions and increasing subsequences in
  permutations.
\newblock {\em Discrete Math.}, 258(1-3):275--287, 2002.

\bibitem{Bugeaud2011}
Yann Bugeaud.
\newblock On the rational approximation to the {T}hue-{M}orse-{M}ahler numbers.
\newblock {\em Ann. Inst. Fourier (Grenoble)}, 61(5):2065--2076 (2012), 2011.

\bibitem{Bugeaud2013}
Yann Bugeaud.
\newblock Automatic continued fractions are transcendental or quadratic.
\newblock {\em Ann. Sci. \'{E}c. Norm. Sup\'{e}r. (4)}, 46(6):1005--1022, 2013.

\bibitem{Bugeaud2014H}
Yann Bugeaud and Guo-Niu Han.
\newblock A combinatorial proof of the non-vanishing of {H}ankel determinants
  of the {T}hue-{M}orse sequence.
\newblock {\em Electron. J. Combin.}, 21(3):Paper 3.26, 17, 2014.

\bibitem{Bugeaud2016BHWW}
Yann Bugeaud, Guo-Niu Han, Zhi-Ying Wen, and Jia-Yan Yao.
\newblock Hankel determinants, {P}ad\'{e} approximations, and irrationality
  exponents.
\newblock {\em Int. Math. Res. Not. IMRN}, (5):1467--1496, 2016.

\bibitem{Christol1980KMFR}
G.~Christol, T.~Kamae, M.~Mend\`es~France, and G.~Rauzy.
\newblock Suites alg\'{e}briques, automates et substitutions.
\newblock {\em Bull. Soc. Math. France}, 108(4):401--419, 1980.

\bibitem{Coons2012V}
Michael Coons and Paul Vrbik.
\newblock An irrationality measure for regular paperfolding numbers.
\newblock {\em J. Integer Seq.}, 15(1):Article 12.1.6, 10, 2012.

\bibitem{Denef1987}
J.~Denef and L.~Lipshitz.
\newblock Algebraic power series and diagonals.
\newblock {\em J. Number Theory}, 26(1):46--67, 1987.

\bibitem{Flajolet1980}
P.~Flajolet.
\newblock Combinatorial aspects of continued fractions.
\newblock {\em Discrete Math.}, 32(2):125--161, 1980.

\bibitem{foata2000principes}
Dominique Foata and Guo-Niu Han.
\newblock Principes de combinatoire classique.
\newblock {\em Lecture notes, Strasbourg}, 2000.

\bibitem{Fokkink2017etal}
Robbert~J. Fokkink, Cor Kraaikamp, and Jeffrey Shallit.
\newblock Hankel matrices for the period-doubling sequence.
\newblock {\em Indag. Math. (N.S.)}, 28(1):108--119, 2017.

\bibitem{Fu2016H}
Hao Fu and Guo-Niu Han.
\newblock Computer assisted proof for {A}pwenian sequences.
\newblock In {\em Proceedings of the 2016 {ACM} {I}nternational {S}ymposium on
  {S}ymbolic and {A}lgebraic {C}omputation}, pages 231--238. ACM, New York,
  2016.

\bibitem{Guo2014GWW}
Ying-Jun Guo, Zhi-Xiong Wen, and Wen Wu.
\newblock On the irrationality exponent of the regular paperfolding numbers.
\newblock {\em Linear Algebra Appl.}, 446:237--264, 2014.

\bibitem{han2015hankel}
Guo-Niu Han.
\newblock Hankel determinant calculus for the {T}hue--{M}orse and related
  sequences.
\newblock {\em Journal of Number Theory}, 147:374--395, 2015.

\bibitem{Han2016Adv}
Guo-Niu Han.
\newblock Hankel continued fraction and its applications.
\newblock {\em Adv. Math.}, 303:295--321, 2016.

\bibitem{Heilermann1846}
J.~B.~H. Heilermann.
\newblock \"{U}ber die {V}erwandlung der {R}eihen in {K}ettenbr\"uche.
\newblock {\em J. Reine Angew. Math.}, 33:174--188, 1846.

\bibitem{Jones1980T}
William~B. Jones and Wolfgang~J. Thron.
\newblock {\em Continued fractions - Analytic theory and applications},
  volume~11 of {\em Encyclopedia of Mathematics and its Applications}.
\newblock Addison-Wesley Publishing Co., Reading, Mass., 1980.

\bibitem{Lasjaunias2015Y}
A.~Lasjaunias and J.-Y. Yao.
\newblock Hyperquadratic continued fractions in odd characteristic with partial
  quotients of degree one.
\newblock {\em J. Number Theory}, 149:259--284, 2015.

\bibitem{Lasjaunias2016Y}
Alain Lasjaunias and Jia-Yan Yao.
\newblock Hyperquadratic continued fractions and automatic sequences.
\newblock {\em Finite Fields Appl.}, 40:46--60, 2016.

\bibitem{Lasjaunias2017Y}
Alain Lasjaunias and Jia-Yan Yao.
\newblock On certain recurrent and automatic sequences in finite fields.
\newblock {\em J. Algebra}, 478:133--152, 2017.

\bibitem{Mills1986R}
W.~H. Mills and David~P. Robbins.
\newblock Continued fractions for certain algebraic power series.
\newblock {\em J. Number Theory}, 23(3):388--404, 1986.

\bibitem{Perron1957II}
Oskar Perron.
\newblock {\em Die {L}ehre von den {K}ettenbr\"{u}chen. {D}ritte, verbesserte
  und erweiterte {A}ufl. {B}d. {II}. {A}nalytisch-funktionentheoretische
  {K}ettenbr\"{u}che}.
\newblock B. G. Teubner Verlagsgesellschaft, Stuttgart, 1957.

\bibitem{Schaeffer2016}
Luke Schaeffer and Jeffrey Shallit.
\newblock Closed, palindromic, rich, privileged, trapezoidal, and balanced
  words in automatic sequences.
\newblock {\em Electron. J. Combin.}, 23(1):Paper 1.25, 19, 2016.

\bibitem{Stieltjes1894Re}
Thomas~Jan Stieltjes.
\newblock Recherches sur les fractions continues.
\newblock {\em Ann. Fac. Sci. Toulouse Math.}, 8(4):J1--J122, 1894.

\bibitem{Thue1912}
Axel Thue.
\newblock \"{U}ber die gegenseitige {L}age gleicher {T}eile gewisser
  {Z}eichenreihen.
\newblock {\em Kra. Vidensk. Selsk. Skrifer, I. Mat. Nat. Kl.}, pages 1--67,
  1912.

\bibitem{Viennot1983Ort}
Xavier Viennot.
\newblock Une th\'eorie combinatoire des polyn\^omes orthogonaux g\'en\'eraux.
\newblock 1983.

\bibitem{Wall1948}
H.~S. Wall.
\newblock {\em Analytic {T}heory of {C}ontinued {F}ractions}.
\newblock D. Van Nostrand Company, Inc., New York, N. Y., 1948.

\end{thebibliography}

\end{document}